\newtheorem{theorem}{Theorem}[section]
\newtheorem{lemma}{Lemma}[section]
\newtheorem{defn}{Definition}[section]
\newtheorem{ex}{Example}[section]
\newtheorem{remark}{Remark}[section]
\newcommand{\CC}{\mathbb{C}}
\newcommand{\RR}{\mathbb{R}}
\newcommand{\NN}{\mathbb{N}}
\newcommand{\QQ}{\mathbb{Q}}
\definecolor{jade}{rgb}{0.10, 0.56, 0.42}
\definecolor{cerise}{rgb}{0.87, 0.19, 0.39}
\tikzstyle{mutable}=[inner sep=0.5mm,circle,draw,minimum size=2mm]
\tikzstyle{frozen}=[inner sep=1mm,rectangle,draw]
\tikzstyle{outline}=[thick,line width=1.5mm,draw=black!10]
\title[Dimer models on cylinders over Dynkin diagrams]
{Dimer models on cylinders over Dynkin diagrams and cluster algebras}
\author{Maitreyee C. Kulkarni}
\address{Department of Mathematics,
Louisiana State University, Baton Rouge, LA, USA}
\email{mkulka2@lsu.edu}
\thanks{The author was supported by the NSF grant DMS-1601862, and an LSU Dissertation Year Fellowship.}
\begin{document}
\begin{abstract} 
In this paper, we describe a general setting for dimer models on cylinders over Dynkin diagrams which in type A reduces to the well-studied case of dimer models on a disc.  We prove that all Berenstein--Fomin--Zelevinsky quivers for Schubert cells in a symmetric Kac--Moody algebra give rise to  dimer models on the cylinder over the corresponding Dynkin diagram.  We also give an independent proof of a result of Buan, Iyama, Reiten and Smith that the corresponding superpotentials are rigid using the dimer model structure of the quivers. 
\end{abstract}
\maketitle

\section{Introduction} 
Cluster algebras were defined by Fomin and Zelevinsky in 2000 \cite{fz02} to study Lusztig's dual canonical basis of quantum groups. A cluster algebra is a certain commutative ring that lies somewhere between a polynomial ring and its field of fractions, and it is generated from an initial collection of data (a quiver and a function on each vertex) by a combinatorial procedure called mutation.   In particular, a cluster algebra is defined starting from a quiver (or directed graph) with $n$ vertices where each vertex $i$ has a function $x_i$ on it.  A process called mutation changes both the quiver and the functions on the vertices, and iteratively produces the generating set of the cluster algebra.

Let $G$ be a Lie group of type ADE and $P$ be a parabolic subgroup. In this setting, Geiss, Leclerc and Schr\"{o}er gave a cluster structure on the coordinate ring of the partial flag variety $G/P$ \cite{gls}. This gives a categorification of the coordinate ring of the affine open cell in $G/P$ by a subcategory of modules over the preprojective algebra associated to the Dynkin diagram of G. This categorification is then lifted to the homogeneous coordinate ring. Jensen, King and Su gave a direct categorification of this homogeneous coordinate ring for Grassmannians, i.e. when $G$ is of type A and $P$ is a maximal parabolic subgroup \cite{jks16}. This is done using the category of (maximal) Cohen--Macaulay modules $T$ over $B$, where $B$ is a quotient algebra of a certain preprojective algebra. 

Recently, Baur, King and Marsh gave a combinatorial model for this categorification.  They used Postnikov diagrams, which were used by Scott to show that the homogeneous coordinate ring of $\mathrm{Gr(k,n)}$ is a cluster algebra \cite{s06}. A Postnikov diagram encodes information about seeds of the cluster algebra and its clusters.  Each region in a Postnikov diagram is labelled by a $k$-subset of $\{1, 2, \ldots, n\}$. Let $I$ be a $k$-subset of $\{1, 2, \ldots, n\}$ corresponding to a minor of the matrix and $M_I$ be a certain Cohen--Macaulay $B$-module associated to $I$. To each Postnikov diagram $D$, associate the module $T_D=\bigoplus_I M_I$.  They define a dimer algebra as the Jacobian algebra for the quiver corresponding to a Postnikov diagram. One of Baur King Marsh's main results is that the dimer algebra $A_D \cong \mathrm{End}_B(T_D)$ \cite{bkm16}, which gives a combinatorial construction of the endomorphism algebra required for their categorification.

The key idea in this paper is to realize cluster algebras associated to symmetric Kac--Moody algebras  by Berenstein--Fomin--Zelevinsky quivers defined in \cite{bfz05}.  We will introduce conceptual framework for dimer models in other types.  The dimer models will play the role of quivers from Postnikov diagrams.  Let $G$ be a Kac--Moody group and $W$ be its Weyl group.  For any pair $(u, v) \in W \times W$, associate a quiver $Q^{u, v}$ following \cite{bfz05}.  In type A, $Q^{u,v}$ is planar for any $u,v \in W$ but in other types, these quivers are not planar in general.  


In this paper, we will see that these quivers can be realized as dimer models on the cylinder over the Dynkin diagram of $G$. Suppose $\Gamma$ is the Dynkin diagram corresponding to  $G$, then $\Gamma \times \RR$ is called the cylinder over the Dynkin diagram $\Gamma$.  A vertex in the Dynkin diagram is called a branching point if it has more than two edges incident to it.  A vertex is called an endpoint if it has exactly one edge incident to it.  Let $V$ be the set of endpoints and branching points of a Dynkin diagram.  The path $\Gamma_{m,n}$ between any two vertices $m$ and $n$ in $V$ is called a branch in the Dynkin diagram. The space $\Gamma_{m,n}\times \RR$ is called the sheet of the cylinder over the branch $\Gamma_{m,n}$. If a Dynkin diagram has $k$ branches then the cylinder over the Dynkin diagram has $k$ sheets glued at a string on every branching point. This realization makes the quiver planar in each sheet of the cylinder. 
\begin{theorem} The quiver $Q^{u,e}$ corresponding to any pair $(u, e)$, where $e$ is the identity element and $u$ is an arbitrary element in the Weyl group, has the following structure: 
\begin{itemize}
\item Each face of $Q^{u,e}$ is oriented.
\item Each face of $Q^{u,e}$ on the cylinder $\Gamma \times \RR$ projects onto an edge of the Dynkin diagram.
\item Each edge of $Q^{u,e}$ projects onto a vertex of the Dynkin diagram or an edge of the Dynkin diagram. 
\end{itemize}
\end{theorem}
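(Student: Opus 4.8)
The plan is to argue directly from the definition of $Q^{u,e}$ given in \cite{bfz05} and from the embedding of this quiver in $\Gamma\times\RR$ recalled above. Since $v=e$, a double reduced word for $(u,e)$ is just an ordinary reduced word $\mathbf i=(i_1,\dots,i_\ell)$ for $u$, and I would set up notation accordingly: the vertices of $Q^{u,e}=Q^{\mathbf i}$ are the indices $1,\dots,\ell$, index $k$ carries the colour $i_k\in V(\Gamma)$ and sits on the line $\{i_k\}\times\RR$ at a height reflecting the order in which colour $i_k$ appears in $\mathbf i$, and there are exactly two kinds of arrows: \emph{horizontal} arrows from $k$ to the next index $k^{+}$ of the same colour, and \emph{inclined} arrows between indices of colours $i_k\neq i_l$, which by the defining rule of \cite{bfz05} are created only when $i_k$ and $i_l$ are adjacent in $\Gamma$. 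The third bullet is then immediate: a horizontal arrow joins two vertices of one colour $i$ and so projects to the vertex $i$; an inclined arrow joins vertices of adjacent colours $i\sim j$ and so projects to the edge $\{i,j\}$; and no arrow of any other shape occurs.

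For the second bullet I would first establish a local statement: reading the arrows incident to a vertex $k$ on the line $\{i\}\times\RR$ in cyclic order in the plane of a sheet through that line, the inclined arrows running into a fixed neighbouring sheet over an edge $\{i,j\}$ occur as one consecutive block, and between the blocks belonging to two distinct neighbours $j,j'$ of $i$ there is always either a horizontal arrow at $k$ or a gap onto the unbounded region. Granting this, a face boundary meets each line it visits only at vertices (a single horizontal arrow borders two \emph{different} faces, so no face can run along one), and at such a vertex the face occupies the angle between two cyclically consecutive arrows, which by the local statement lie over a single edge; hence the colours along a face boundary all lie on one edge $\{i,j\}$ and the face projects onto that edge. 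The local statement is exactly where reducedness of $\mathbf i$ enters: it forces the inclined arrows into the ``sandwiching'' pattern which already subdivides the region flanking every long horizontal arrow into pieces lying over a single edge, so that the three-line, ``hexagonal'' faces occurring for general double reduced words cannot form when $v=e$.

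For the first bullet, having reduced to a single edge $\{i,j\}$, I would analyse the planar sub-quiver $Q_{ij}$ spanned by the colour-$i$ and colour-$j$ vertices together with the arrows among them, whose bounded faces are precisely the faces of $Q^{u,e}$ lying over $\{i,j\}$. Each such face is a polygon whose boundary alternates between runs of horizontal arrows along the two lines and single inclined arrows; since for $v=e$ all horizontal arrows are oriented the same way and the inclined arrows inherit a compatible orientation from the sign rule of \cite{bfz05}, one checks that, traversing this boundary, every arrow is traversed consistently, so the boundary is a directed cycle (i.e.\ the face is oriented). What remains is a finite case check over the possible shapes — the two kinds of triangular face capping a run and the generic quadrilateral-type faces — verifying the directions prescribed in \cite{bfz05}; one sees along the way that two faces sharing an arrow receive opposite orientations, so that $Q^{u,e}$ is genuinely a dimer model.

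The step I expect to be the real obstacle is the local statement behind the second bullet, together with checking that the prescribed embedding is genuinely planar within each sheet: both reduce to showing that the inclined arrows of $Q^{u,e}$ are ``dense enough'' — a consequence of reducedness of $\mathbf i$ and the precise sandwiching condition — that neither a face nor a pair of embedded arrows can straddle two edges of $\Gamma$. Once the faces are pinned down to single edges, the orientation assertion of the first bullet is routine sign bookkeeping with the BFZ conventions.
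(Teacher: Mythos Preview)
Your plan is viable in outline but diverges from the paper's argument and contains one step that does not work as written.

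The paper's route is purely through index arithmetic on the reduced word. Planarity within each sheet (Lemma~3.1) is proved by supposing two inclined arrows between adjacent strings cross and deriving contradictory inequalities among $k,k^{+},l,l^{+},\dots$; orientation (Lemma~3.2) is proved by supposing a bounded cycle between two strings fails to be oriented and showing that the resulting chain of inequalities forces an interior inclined arrow subdividing it. A byproduct (Remark~3.1) is the precise shape of every face: an $n$-face has $n-1$ vertices on one string and a single vertex on the adjacent string. This gives the second bullet immediately, and the third bullet is read off from the definition exactly as you do.

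Your route --- a local statement about the cyclic order of arrows at a vertex, followed by a case analysis of face shapes --- is a genuinely different organisation. It could be made to work, but two points need repair. First, the sentence ``a single horizontal arrow borders two different faces, so no face can run along one'' is not correct: face boundaries \emph{do} contain runs of horizontal arrows (indeed, by the paper's Remark, all but two boundary arrows of a face are horizontal), so whatever you intended here must be reformulated. Second, your picture of faces as polygons alternating runs along \emph{both} strings with single inclined arrows allows $2{+}2$-type quadrilaterals; in fact these are always subdivided, and the true faces are the $(n{-}1){+}1$ shapes of Remark~3.1. If your case check is based on the looser picture it will not match the actual faces, and the ``compatible orientation'' you invoke is really a consequence of this sharper structural fact rather than of a generic sign rule.

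The paper's index-inequality approach has the advantage of delivering planarity, orientation, and the exact face shape in one stroke --- and that face shape is reused in Section~4 for the rigidity argument. Your topological approach would still need an independent proof of the $(n{-}1){+}1$ structure, which in practice forces you back to the same inequalities.
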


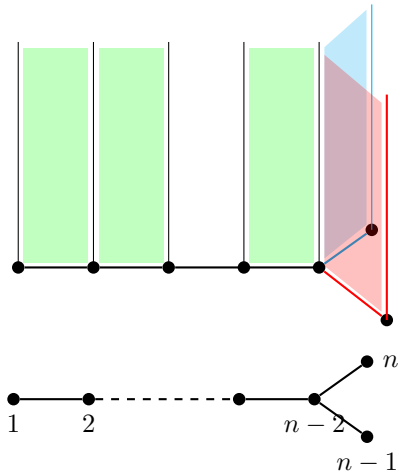
\begin{figure}[ht!]
\begin{tikzpicture}
\node [fill,circle,scale=0.5,label=below:$$] (0) at (-1,0) {};
\node [fill,circle,scale=0.5,label=below:$$] (1) at (0,0) {};
\node [fill,circle,scale=0.5,label=below:$$] (2) at (1,0) {};
\node [fill,circle,scale=0.5,label=below:$$] (3) at (2,0) {};
\node [fill,circle,scale=0.5,label=below:$$] (4) at (3.9,-0.7) {};
\node [fill,circle,scale=0.5,label=right:$$] (n) at (3.7,0.5) {};
\node [fill,circle,scale=0.5,label=below:$$] (3) at (3,0) {};
\draw [thick,-] (0)-- (1);
\draw [thick,-] (1)-- (2);
\draw [thick,-] (2)-- (3,0);
\draw [thick,-,red] (3)-- (4);
\draw [thick,-,cyan] (3)-- (n);
\draw (0,0)-- (0,3);
\draw (-1,0)-- (-1,3);
\draw (1,0)-- (1,3);
\draw (2,0)-- (2,3);
\draw (3,0)-- (3,3);
\draw [cyan](3.7,0.5)-- (3.7,3.5);
\draw [thick,-,red] (3.9,-0.7)-- (3.9,2.3);
\fill[nearly transparent, green]  (-0.93,2.92) rectangle (-0.07,0.07);
\fill[nearly transparent, green] (0.07,0.07) rectangle (0.93,2.92);
\fill[nearly transparent, green]  (2.93,2.92) rectangle (2.07,0.07);
\fill[nearly transparent,cyan] (3.07,0.12) -- (3.07,2.93)--(3.63,3.43)--(3.63,0.57);
\fill[nearly transparent,red] (3.07,0.02) -- (3.07,2.83)--(3.83,2.19)--(3.83,-0.59);
\end{tikzpicture}

\begin{tikzpicture}
\node [fill,circle,scale=0.5,label=below:$1$] (1) at (0,0) {};
\node [fill,circle,scale=0.5,label=below:$2$] (2) at (1,0) {};
\node [fill,circle,scale=0.5,label=below:$n-1$] (4) at (4.7,-0.5) {};
\node [fill,circle,scale=0.5,label=right:$n$] (n) at (4.7,0.5) {};
\node [fill,circle,scale=0.5,label=below:$$] (3) at (3,0) {};
\node [fill,circle,scale=0.5,label=below:$n-2$] (5) at (4,0) {};
\draw [thick,-] (1)-- (2);
\draw [thick,-,dashed] (2)-- (3,0);
\draw [thick,-] (5)-- (4);
\draw [thick,-] (5)-- (n);
\draw [thick,-] (3)-- (5);
\end{tikzpicture}
\caption{The cylinder over Dynkin diagram of type $D_n$}
\end{figure}

Any quiver on a cylinder over a Dynkin diagram that has the above properties will be called a dimer model on the cylinder [see Definition 3.2] because it will play a similar role to the dimer models of \cite{bkm16}.  The dimer algebra of \cite{bkm16} will be replaced  by the Jacobian algebra (see  \cite{dwz08}) corresponding to a certain potential of the BFZ quiver.  The Jacobian algebra $J(Q, S)$ of the quiver $Q$ depends on the choice of a potential.  In this case we use a particularly nice type of potential called a rigid potential.  Every rigid potential is non-degenerate which means that any sequence of mutations of the quiver with potential does not create a 2-cycle in the quiver. 

We define the superpotential $S$ of a quiver $Q$ as follows: \[S= \sum{\text{clockwise oriented faces}}-\sum{\text{anti-clockwise oriented faces}}.\]  Note that a face of a quiver is a cycle which is not divided by an edge.  We will show that this is a rigid potential, i.e. that all cycles in the quiver $Q$ lie in the Jacobian ideal of the potential $S$.  This will be proved in two steps, first for faces, and then for non-self-intersecting oriented cycles. As each cycle in the quiver is oriented, the above two cases cover all cycles in the quiver. 

The dimer model structure of these quivers reduces the global problem of verification of rigidity of the super-potential to a local problem on each sheet.  Let $S_r$ be the superpotential of the subquiver $Q_r$ of $Q$ drawn on the $rth$ sheet of the cylinder.  We show that $S_r$ is rigid in the $rth$ sheet, for each $r$.  As sheets are glued at a string, they only share edges that lie on the gluing string with each other.  Particularly, they do not share any faces, therefore the superpotential $S$ is simply the sum $S=\sum_r S_r$. As each face belongs to a unique sheet, the gluing of sheets does not affect the rigidity of the potential. 

In Section 2, we give some preliminary definitions.  In Section 3, we define a Berenstein--Fomin--Zelevinsky quiver, then give our construction of cylinders on Dynkin diagrams and quivers from double Bruhat cells. In this section we prove that the BFZ quivers can be realized as dimer models on the cylinder over a Dynkin diagram.  The quiver lies entirely on the cylinder by construction.  In the last section, as an application, we give an independent proof of the following result in  \cite{birs11}:
\begin{theorem}  For any Weyl group element $u \in W$, the superpotential of the BFZ quiver $Q^{u,e}$ is rigid. 
\end{theorem}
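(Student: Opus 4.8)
The plan is to exploit the dimer-model structure established in Theorem 1.1 to reduce the rigidity of $S$ to a planar, one-sheet-at-a-time problem, and then to run a ``fill in the disc'' argument there. By Theorem 1.1 every face of $Q^{u,e}$ projects onto a single edge of the Dynkin diagram, so every face lies entirely inside one sheet $\Gamma_{m,n}\times\RR$; hence two distinct sheets share no face, only arrows lying over a branching point (the gluing string). Writing $Q_r$ for the subquiver on the $r$th sheet and $S_r=\sum(\text{clockwise faces of }Q_r)-\sum(\text{anticlockwise faces of }Q_r)$, we get $S=\sum_r S_r$, and an arrow on a gluing string lies on the boundary of each sheet containing it, so the cyclic derivatives of $S$ differ from those of the $S_r$ only by sums of faces. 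It therefore suffices to prove that each $S_r$ is rigid inside its (planar) sheet: every face, and every oriented cycle, of $Q^{u,e}$ can be cut at the gluing strings into pieces lying in single sheets. Finally, since by Theorem 1.1 every cycle of $Q^{u,e}$ is oriented and a self-intersecting oriented cycle factors in the path algebra, at its double points, into non-self-intersecting oriented cycles, it is enough to treat inside a fixed planar $Q_r$: (i) faces, and (ii) non-self-intersecting oriented cycles.

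For the local set-up, since each face of $Q_r$ is oriented and a shared arrow is traversed in its natural direction by both faces containing it, adjacent faces have opposite chirality; thus each interior arrow $\alpha$ lies in exactly one clockwise face $\alpha p_\alpha^{+}$ and exactly one anticlockwise face $\alpha p_\alpha^{-}$, while a boundary arrow of $Q_r$ lies in a single face $\alpha p_\alpha$, giving
\[
\partial_\alpha S_r=p_\alpha^{+}-p_\alpha^{-}\quad(\alpha\text{ interior}),\qquad \partial_\alpha S_r=\pm\,p_\alpha\quad(\alpha\text{ a boundary arrow of }Q_r).
\]
For Step (i): if a face $F$ contains a boundary arrow $\alpha$ then $F=\pm\,\alpha\,\partial_\alpha S_r\in J(S_r)$; for an arbitrary face $F$, choosing $\alpha\in F$ and letting $F'$ be the face on the other side of $\alpha$ gives $\alpha\,\partial_\alpha S_r=\pm(F-F')$, so $F\equiv F'$ modulo $J(S_r)$ and cyclic equivalence. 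Since $Q_r$ is finite with nonempty boundary and its dual graph is connected, every face is joined to one containing a boundary arrow by a chain of adjacent faces, and telescoping along such a chain places every face of $Q_r$ in $J(S_r)$ up to cyclic equivalence.

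For Step (ii), a non-self-intersecting oriented cycle $C$ bounds a disc $D$ tiled by faces $F_1,\dots,F_t$, and I would induct on $t$ via the auxiliary statement that \emph{two parallel directed paths bounding a disc tiled by $k$ faces differ by an element of $J(S_r)$}. For $t=1$, $C$ is a face and Step (i) applies. For $t\ge2$, pick an extreme face $F\subseteq D$ meeting $C$ in a single arc $a$ and write the cyclic words $C=ac$ and $\partial F=ag$; then $F\in J(S_r)$ by Step (i), and the relations reduce $C$ modulo $J(S_r)$ to the difference of the two parallel directed paths $c$ and $g$ running along $\partial D$ and $\partial F$ between the ends of $a$, which bound the $(t-1)$-face disc $D\setminus F$, so the auxiliary statement at level $t-1$ closes the induction. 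The main obstacle is exactly the base move of this induction: one cannot simply ``reroute'' a path around the face $F$, because the quiver is directed and $\partial F$ offers no directed detour along $g$; instead one must use the full planar dimer structure from Theorem 1.1 — the relations $p_\alpha^{+}\equiv p_\alpha^{-}$ for every interior arrow, together with the fact that $Q_r$ lives over a \emph{path} in the Dynkin diagram and is therefore combinatorially a type-$A$ (wiring-diagram) quiver, where this kind of path-straightening is available — to conclude that the two bounding paths agree modulo $J(S_r)$. Combining Steps (i) and (ii), the factorization of self-intersecting oriented cycles, and the per-sheet reduction then shows that every cycle of $Q^{u,e}$ lies in $J(S)$ up to cyclic equivalence, i.e.\ $S$ is rigid.
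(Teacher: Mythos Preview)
Your reduction to sheets and your Step~(i) for faces are essentially the paper's argument and are fine. The gap is in Step~(ii). Your plan is to peel off an ``extreme face'' $F$ meeting $C$ in a single arc $a$, writing $C=ac$ and $\partial F=ag$. As you yourself observe, this move does not go through: $g$ is a directed path parallel to $c$, but there is no Jacobian relation that replaces the \emph{multi-arrow} path $a$ by anything, and $ag\in J(S_r)$ says nothing about $ac$. Your proposed auxiliary statement (two parallel directed paths bounding a $k$-face disc agree modulo $J(S_r)$) is precisely what is needed, but you do not prove it; the appeal to ``path-straightening being available in type-$A$ wiring quivers'' is a restatement of the goal, not an argument, and in general planar dimer models such path-equivalence is a nontrivial consistency condition, not a formal consequence of the relations.

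The paper's resolution is different and uses BFZ combinatorics you never invoke. Instead of a face meeting $C$ in one arc, the paper finds a face meeting $C$ in \emph{all but one} of its arrows: a single interior arrow $e$ such that one of the two faces on $e$ has every other edge on $C$. Then $C=p_1p_2$ with $F_i=ep_1$, and the \emph{single} relation $\partial_e S_r=p_1-p_1'$ replaces $p_1$ by $p_1'$, strictly shrinking the enclosed region. The existence of such an $e$ is the paper's Lemma~4.2, and its proof uses the specific structure of $Q^{u,e}$: every $n$-face has $n-1$ vertices on one string and one on the adjacent string, and each vertex has at most one incoming and one outgoing inclined arrow to each neighbouring string. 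Taking the right-most vertical sub-path of $C$ and the inclined arrow entering it, these constraints force an interior arrow $e_m$ that cuts a single face off $C$. That combinatorial lemma is the missing idea in your argument.
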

We prove that the quiver is planar in each sheet of the cylinder and each face of the quiver is oriented.   We prove the rigidity in each sheet by observing that the faces on the boundary belong to the  Jacobian ideal.  Then we use induction on the faces of dimer models to prove that every face belongs to the ideal.  Then we notice that every cycle can be written in terms of faces that the cycle contains, which tells us that each cycle is in the ideal. 

{\bf Acknowledgements.~}
I am grateful to my advisor Milen Yakimov for his advice on this project,  many helpful discussions and great ideas. I would also like to thank Robert Marsh for his insightful discussions on dimer models.

\section{Preliminaries}
Cluster algebras are defined using quivers and their mutations.   A quiver is a directed graph. We denote it by $Q$, its set of vertices by $Q_0$ and its set of edges by $Q_1$.  Consider two  maps $s, t: Q_1 \to Q_0$. The map $s(\alpha)=$ the starting vertex of the edge $\alpha$ and $t(\alpha)=$ the end-vertex of the edge $\alpha$.
\begin{defn}
A \textbf{seed} is a quiver together with elements $\{x_i\}_{i \in Q_0}$ of a field on the vertices that together freely generate that field over $\mathbb{Q}$.  The elements on the vertices are called \textbf{cluster variables}.
\end{defn}
\begin{defn} The  process of mutation of a seed at vertex $k$ is defined as follows:
\begin{itemize}
\item Step 1:  Reverse all arrows touching the vertex $x_k$.
\item Step 2: Complete triangles, i.e., for every path $i \to k \to j$, and an edge $j \to i$.
\item Step 3: Cancel any 2-cycles created in Step 2.
\item Step 4: Replace $x_k$ at the vertex $k$ with $\displaystyle{x_k^{\prime}=\frac{ \prod_{k \to l} x_l + \prod_{l \to k} x_l}{x_k}}$ where the products are over edges with source vertex $k$ and with target vertex $k$ respectively.
\end{itemize}
\end{defn}

\begin{defn} Let $Q$ be a finite quiver without loops or 2-cycles with vertices ${1, \cdots , n}$ and the initial seed $(Q, x_1, \ldots , x_n)$. The cluster algebra $\mathcal{A_Q}$ is the subalgebra of $\QQ(x_1,x_2,\ldots, x_n)$ generated by all cluster variables obtained from all possible sequences of mutations applied to the initial seed.
\end{defn}

\begin{ex} We show an example of mutation of the following quiver $Q$ at the vertex labelled $x_2$.

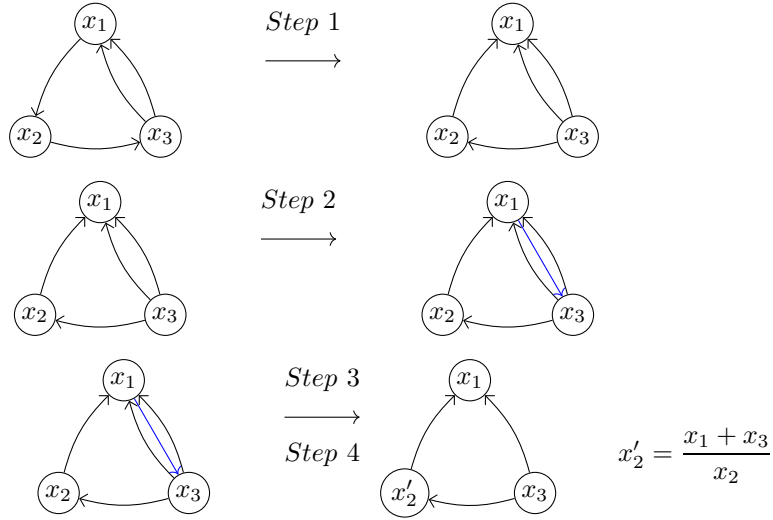
\begin{figure}[ht!]

\begin{minipage}[t]{0.25\textwidth}
\kern0pt
\raggedright
\begin{tikzpicture}
	\begin{scope}[xshift=3in,yshift=-.5cm]
		\node[mutable] (x1) at (90:1) {$x_1$};
		\node[mutable] (x2) at (210:1) {$x_2$};
		\node[mutable] (x3) at (-30:1) {$x_3$};

		\draw[-angle 90,relative, out=-15,in=-165] (x1) to (x2);
	
		\draw[-angle 90,relative, out=-15,in=-165] (x2) to (x3);
		\draw[-angle 90,relative, out=15,in=165] (x3) to (x1);
		\draw[-angle 90,relative, out=-15,in=-165] (x3) to (x1);
    
	\end{scope}
	\end{tikzpicture}
\end{minipage}
\begin{minipage}[t]{0.1\textwidth}
\kern0pt
\raggedright
\begin{tikzpicture}
	\begin{scope}[xshift=3in,yshift=-.5cm]
		\node (x1) at (90:0.5) {$Step \  1$};
		\draw[->](-0.5,0) to (0.5,0);
    \end{scope}
	\end{tikzpicture}
\end{minipage}
\begin{minipage}[t]{0.25\textwidth}
\kern0pt
\raggedleft
\begin{tikzpicture}
	\begin{scope}[xshift=3in,yshift=-.5cm]
		\node[mutable] (x1) at (90:1) {$x_1$};
		\node[mutable] (x2) at (210:1) {$x_2$};
		\node[mutable] (x3) at (-30:1) {$x_3$};

		\draw[-angle 90,relative, out=15,in=165] (x2) to (x1);
	
		\draw[-angle 90,relative, out=15,in=165] (x3) to (x2);
		\draw[-angle 90,relative, out=15,in=165] (x3) to (x1);
		\draw[-angle 90,relative, out=-15,in=-165] (x3) to (x1);
    
	\end{scope}
	\end{tikzpicture}
\end{minipage}

\vspace{2mm}

\begin{center}
\begin{minipage}[t]{0.25\textwidth}
\kern0pt
\raggedright
\begin{tikzpicture}
	\begin{scope}[xshift=3in,yshift=-.5cm]
		\node[mutable] (x1) at (90:1) {$x_1$};
		\node[mutable] (x2) at (210:1) {$x_2$};
		\node[mutable] (x3) at (-30:1) {$x_3$};
 		\draw[-angle 90,relative, out=15,in=165] (x2) to (x1);
		\draw[-angle 90,relative, out=15,in=165] (x3) to (x2);
		\draw[-angle 90,relative, out=15,in=165] (x3) to (x1);
		\draw[-angle 90,relative, out=-15,in=-165] (x3) to (x1);
    
	\end{scope}
	\end{tikzpicture}

\end{minipage}%
\begin{minipage}[t]{0.1\textwidth}
\kern0pt
\raggedright
\begin{tikzpicture}
	\begin{scope}[xshift=3in,yshift=-.5cm]
		\node (x1) at (90:0.5) {$Step \  2$};
		\draw[->, label=above:$Step 1$](-0.5,0) to (0.5,0);
    \end{scope}
	\end{tikzpicture}
\end{minipage}
\begin{minipage}[t]{0.25\textwidth}
\kern0pt
\raggedleft
\begin{tikzpicture}
	\begin{scope}[xshift=3in,yshift=-.5cm]
		\node[mutable] (x1) at (90:1) {$x_1$};
		\node[mutable] (x2) at (210:1) {$x_2$};
		\node[mutable] (x3) at (-30:1) {$x_3$};

		\draw[-angle 90,relative, out=15,in=165] (x2) to (x1);
	
		\draw[-angle 90,relative, out=15,in=165] (x3) to (x2);
		\draw[-angle 90,relative, out=15,in=165] (x3) to (x1);
		\draw[-angle 90,relative, out=-15,in=-165] (x3) to (x1);
    		\draw[->, blue] (x1) to (x3);
	\end{scope}
	\end{tikzpicture}
\end{minipage}
\end{center}

\vspace{2mm}

\begin{minipage}[t]{0.25\textwidth}
\kern0pt
\raggedright
\begin{tikzpicture}
	\begin{scope}[xshift=3in,yshift=-.5cm]
		\node[mutable] (x1) at (90:1) {$x_1$};
		\node[mutable] (x2) at (210:1) {$x_2$};
		\node[mutable] (x3) at (-30:1) {$x_3$};

		\draw[-angle 90,relative, out=15,in=165] (x2) to (x1);
	
		\draw[-angle 90,relative, out=15,in=165] (x3) to (x2);
		\draw[-angle 90,relative, out=15,in=165] (x3) to (x1);
		\draw[-angle 90,relative, out=-15,in=-165] (x3) to (x1);
		\draw[->, blue] (x1) to (x3);
	\end{scope}
	\end{tikzpicture}

\end{minipage}%
\begin{minipage}[t]{0.1\textwidth}
\kern0pt
\raggedright
\begin{tikzpicture}
	\begin{scope}[xshift=3in,yshift=-.5cm]
		\node (x1) at (90:0.5) {$Step \  3$};
		\node () at (-90:0.5) {$Step \  4$};
		\draw[->](-0.5,0) to (0.5,0);
    \end{scope}
	\end{tikzpicture}
\end{minipage}
\begin{minipage}[t]{0.2\textwidth}
\kern0pt
\raggedleft
\begin{tikzpicture}
	\begin{scope}[xshift=3in,yshift=-.5cm]
		\node[mutable] (x1) at (90:1) {$x_1$};
		\node[mutable] (x2) at (210:1) {$x'_2$};
		\node[mutable] (x3) at (-30:1) {$x_3$};
		\draw[-angle 90,relative, out=15,in=165] (x2) to (x1);
		\draw[-angle 90,relative, out=15,in=165] (x3) to (x2);
		\draw[-angle 90,relative, out=-15,in=-165] (x3) to (x1);
		\node () at (3,0) {$x'_2=\displaystyle\frac{x_1+x_3}{x_2}$};
	\end{scope}
	\end{tikzpicture}
\end{minipage}
\caption{Mutation of a quiver}
\end{figure}
\end{ex}

 \begin{defn}
 A quiver is said to be 2-acyclic if it has no oriented 2-cycles.
\end{defn}
The \textbf{path algebra} $\CC(Q)$ of a quiver $Q$ is an algebra generated by paths in the quiver $Q$ with multiplication given by concatenation of paths. A \textbf{potential} $S \in \CC(Q)$  is a linear combination of cycles in the quiver. The pair $(Q,S)$ of a quiver and its potential is called a quiver with potential or a QP. We will follow the definition of  mutation of quivers with potential in \cite{dwz08}.  One of the hardships of the theory is that mutations of potentials may not produce a 2-acyclic quiver in general. 

\begin{defn} A QP $(Q,S)$ is called non-degenerate if every sequence of mutations of $(Q, S)$ is 2-acyclic.
\end{defn}
The process of verifying non-degeneracy is an infinite process in general, as the quiver may not be mutation finite.  To verify non-degeneracy of a potential without going through this infinite process, we use a stronger condition on a potential called rigidity. 
\begin{theorem} [\cite{dwz08}] Every rigid potential is non-degenerate. 
\end{theorem}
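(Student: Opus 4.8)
The plan is to deduce this from two facts about quivers with potential established in \cite{dwz08}, combined by a short induction. The first is that rigidity is preserved by mutation: if $(Q,S)$ is rigid, then so is $\mu_k(Q,S)$ for every vertex $k$. The second is that a rigid QP automatically has a $2$-acyclic underlying quiver. Granting these, let $(Q,S)$ be rigid. By the second fact $Q$ is $2$-acyclic, so $\mu_k$ is defined for every $k$; and for any sequence $k_1,\dots,k_\ell$ the QP $\mu_{k_\ell}\cdots\mu_{k_1}(Q,S)$ is rigid by the first fact (induction on $\ell$) and hence $2$-acyclic by the second. Since the sequence was arbitrary, $(Q,S)$ is non-degenerate.

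To prove the first fact I would follow the structure of the construction of $\mu_k$ in \cite{dwz08}, namely $\mu_k=(\,\cdot\,)_{\mathrm{red}}\circ\tilde\mu_k$, the \emph{premutation} $\tilde\mu_k$ followed by passage to the reduced part via the splitting theorem (every QP is right-equivalent to the direct sum of a trivial QP and a reduced QP, uniquely up to right-equivalence). Three points are needed. First, premutation preserves the property ``every cycle of the quiver lies in the Jacobian ideal'': one rewrites an arbitrary cycle of the premutated quiver in terms of the composite arrows $[ba]$ (for paths $i\to k\to j$ through $k$) and the reversed arrows $a^{*},b^{*}$, and checks, using the cyclic derivatives of the premutated potential $[S]+\sum[ba]\,a^{*}b^{*}$, that the result lies in the new Jacobian ideal. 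Second, this property is invariant under right-equivalence, since a right-equivalence is a continuous $R$-algebra automorphism fixing the idempotents, hence carries one Jacobian ideal onto that of a cyclically equivalent potential, and Jacobian ideals of cyclically equivalent potentials coincide. Third, a direct sum of a trivial QP and a reduced QP has the property if and only if its reduced summand does --- for the trivial summand the Jacobian ideal already contains the whole arrow ideal, and a direct sum has no cross terms. Chaining these, $\tilde\mu_k(Q,S)$ has the property by the first point, it is preserved when we apply the splitting theorem by the second, and then the third gives that the reduced part $\mu_k(Q,S)$ is rigid.

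For the second fact, after replacing $(Q,S)$ by its reduced part we may assume $S\in\mathfrak{m}^{3}$ (no terms of length $<3$) and that $Q$ has no loops. Suppose $Q$ had a $2$-cycle, with arrows $a\colon i\to j$ and $b\colon j\to i$, and consider the length-two cycle $ab$. Rigidity would force $ab\in J(S)$. But $S\in\mathfrak{m}^{3}$ gives $\partial_c S\in\mathfrak{m}^{2}$ for every arrow $c$, so the only length-two homogeneous components appearing in $J(S)$ are those of $\partial_c S_3$, $c\in Q_1$, where $S_3$ is the cubic part of $S$; and a length-two path obtained by deleting a single arrow from a length-three cyclic word is a loop-free $2$-cycle only if the deleted arrow is itself a loop, which is impossible. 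Hence $ab$ cannot lie in $J(S)$, a contradiction, so $Q$ is $2$-acyclic.

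The genuinely laborious step is the first point of the second paragraph: tracking how an arbitrary cycle of the premutated quiver, which may enter and leave the mutated vertex $k$ many times, is rewritten in terms of the arrows $[ba],a^{*},b^{*}$ and then identified inside the Jacobian ideal of $[S]+\sum[ba]\,a^{*}b^{*}$. In particular the $2$-cycles at $k$ that premutation creates --- and that the subsequent reduction cancels --- have to be bookkept carefully there; everything else in the argument is formal.
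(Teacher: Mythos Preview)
The paper does not give its own proof of this statement; it is quoted from \cite{dwz08} and used as a black box.  Your outline is exactly the argument given there: mutation of QPs preserves rigidity, a reduced rigid QP is $2$-acyclic, and induction on the length of a mutation sequence then yields non-degeneracy.  So there is nothing in the paper to compare your approach against, and the approach itself is the standard one.

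Two minor repairs to your sketch of the second fact.  First, you write ``rigidity would force $ab\in J(S)$'', but by the definition used here (and in \cite{dwz08}) rigidity only gives that $ab$ is \emph{cyclically equivalent} to an element of $J(S)$.  Your degree-$2$ computation must therefore be carried out modulo the span of $cd-dc$ over $2$-cycles $cd$, not just modulo $J(S)$; one checks that $ab$ still cannot lie in that larger subspace when $Q$ is loop-free and $S\in\mathfrak{m}^{3}$, so the conclusion survives.  Second, ``$Q$ has no loops'' is not something you may simply assume: either take it as a standing hypothesis (as \cite{dwz08} does throughout), or observe by the same degree argument one step lower that a reduced rigid QP can have no loops, since a loop would be a degree-$1$ cycle while $J(S)\subset\mathfrak{m}^{2}$ and the degree-$1$ commutator span is zero.
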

\noindent To define a rigid potential, let us first define the cyclic derivative of a potential.  For every $a \in Q_1$, the cyclic derivative $\partial_a$ is defined as: 
\[ \partial_a(e_1e_2\cdots e_n)= e_{i+1}\cdots e_ne_1\cdots e_{i-1},\]  where $e_1e_2\cdots e_n$ is a cycle in the quiver and  $a=e_i$.  If $a \neq e_i$ for any $i$, then $\partial_a(e_1e_2\cdots e_n)=  0$.
If $S$ is a potential of $Q$, we define the Jacobian ideal $J(S)$ to be the ideal generated by $\partial_a(S)$, for all $a \in Q$. The Jacobian algebra $P(Q,S)$ is the quotient $\CC(Q)/J(S)$. 

\begin{defn} A QP $(Q,S)$ is rigid if every cycle Q is cyclically equivalent  to an element of $J(S)$.
\end{defn}
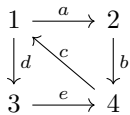
\begin{figure}[ht!]

\begin{tikzcd}[ampersand replacement=\&]
1 \ar{r}{a} \ar{d}{d} \& 2 \ar{d}{b} \& \& \\
3 \ar{r}{e} \& 4 \ar{ul}[above]{c}
\end{tikzcd}
\caption{Rigid and non-rigid potentials}
\label{fig:rigid}
\end{figure}

\begin{ex}
Consider the potential $S_1=abc$ in the quiver in ~Figure \ref{fig:rigid}.  Then, by differentiating $S_1$ with respect to the edges $a$, $b$ and $c$, we get that $J(S_1)=\langle bc, ca, ab\rangle$. So $abc \in J(S_1)$ but $cde \notin J(S_1)$. Therefore $S_1$ is not rigid.
If $S_2=abc+cde$, then  $J(S_2)=\langle bc, ca, ab+de, ec, cd\rangle$. So $abc \in J(S_2)$ and $cde \in J(S_2)$. Therefore $S_2$ is rigid.

\end{ex}

\section{Quivers on cylinders over Dynkin diagrams}

\begin{figure}[ht!]
\begin{tikzpicture}
\node [fill,circle,scale=0.5,label=below:$$] (0) at (-1,0) {};
\node [fill,circle,scale=0.5,label=below:$$] (1) at (0,0) {};
\node [fill,circle,scale=0.5,label=below:$$] (2) at (1,0) {};
\node [fill,circle,scale=0.5,label=below:$$] (3) at (2,0) {};
\node [fill,circle,scale=0.5,label=below:$$] (4) at (3.9,-0.7) {};
\node [fill,circle,scale=0.5,label=right:$$] (n) at (3.7,0.5) {};
\node [fill,circle,scale=0.5,label=below:$$] (3) at (3,0) {};
\draw [thick,-] (0)-- (1);
\draw [thick,-] (1)-- (2);
\draw [thick,-] (2)-- (3,0);
\draw [thick,-,red] (3)-- (4);
\draw [thick,-,cyan] (3)-- (n);
\draw (0,0)-- (0,3);
\draw (-1,0)-- (-1,3);
\draw (1,0)-- (1,3);
\draw (2,0)-- (2,3);
\draw (3,0)-- (3,3);
\draw [cyan](3.7,0.5)-- (3.7,3.5);
\draw [thick,-,red] (3.9,-0.7)-- (3.9,2.3);
\fill[nearly transparent, green]  (-0.93,2.92) rectangle (-0.07,0.07);
\fill[nearly transparent, green] (0.07,0.07) rectangle (0.93,2.92);
\fill[nearly transparent, green]  (2.93,2.92) rectangle (2.07,0.07);
\fill[nearly transparent,cyan] (3.07,0.12) -- (3.07,2.93)--(3.63,3.43)--(3.63,0.57);
\fill[nearly transparent,red] (3.07,0.02) -- (3.07,2.83)--(3.83,2.19)--(3.83,-0.59);
\end{tikzpicture}

\begin{tikzpicture}
\node [fill,circle,scale=0.5,label=below:$1$] (1) at (0,0) {};
\node [fill,circle,scale=0.5,label=below:$2$] (2) at (1,0) {};
\node [fill,circle,scale=0.5,label=below:$3$] (3) at (2,0) {};
\node [fill,circle,scale=0.5,label=below:$n-1$] (4) at (4.7,-0.5) {};
\node [fill,circle,scale=0.5,label=right:$n$] (n) at (4.7,0.5) {};
\node [fill,circle,scale=0.5,label=below:$$] (6) at (3,0) {};
\node [fill,circle,scale=0.5,label=below:$n-2$] (5) at (4,0) {};
\draw [thick,-] (1)-- (2);
\draw [thick,-] (2)-- (3);
\draw [thick,-,dashed] (2)-- (3,0);
\draw [thick,-] (5)-- (4);
\draw [thick,-] (5)-- (n);
\draw [thick,-] (6)-- (5);
\end{tikzpicture}
\caption{The cylinder over Dynkin diagram of type $D_n$}
\end{figure}
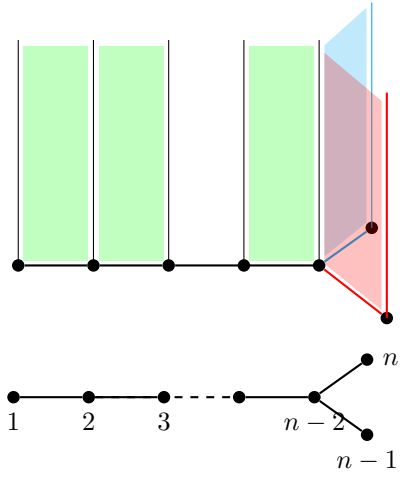

\subsection{Cylinders over Dynkin diagrams} Let $\Gamma$ be a Dynkin diagram. A vertex of a Dynkin diagram is called an endpoint if it has only one edge incident to it. A vertex is called a branching point if it has strictly more than two edges incident to it.  A path $\Gamma_{m, n}$ between two vertices $m$ and $n$ in $\Gamma$ is called a branch if both $m$ and $n$ are branching points or endpoints or if one of them is a branching point and the other is an endpoint. 
\begin{defn} For a Dynkin diagram $\Gamma$, we define the cylinder over $\Gamma$ to be the topological space $\Gamma \times \RR$. Let $\Gamma_0$ be the set of vertices of $\Gamma$. We call the set $\Gamma_0 \times \NN \subset \Gamma \times \RR$ a grid on the cylinder.  The set $\Gamma_{m,n} \times \RR$ is called the sheet over the branch $\Gamma_{m,n}$.  The length of a sheet is the number of edges on the branch.  The subset $\{x_0\} \times \RR$ where $x_0 \in \Gamma_0$ is called a string.
\end{defn}

\begin{ex}  A quiver for double Bruhat cells of $D_n$ can be drawn on a book-like structure as shown in the figure below. The cylinder $D_n \times \RR$ has $n$ strings and three sheets; one sheet of length $n-3$ and two sheets of length 1  glued together at their boundaries. 
\end{ex}

\begin{defn} A quiver on the cylinder over a Dynkin diagram is called a dimer model on the cylinder if
\begin{enumerate} 
\item Each arrow of the quiver projects onto an edge or a vertex of the Dynkin diagram.
\item Each face projects onto an edge of the Dynkin diagram.
\item Each face is oriented. 
\end{enumerate}
\end{defn}

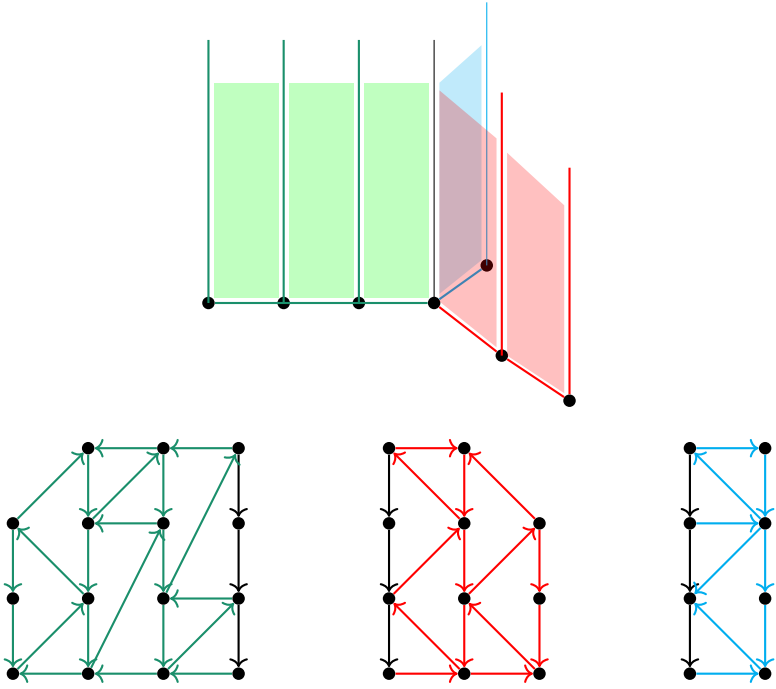
\begin{figure}[ht!]
\begin{tikzpicture}
\node [fill,circle,scale=0.5,label=below:$$] (1) at (0,0) {};
\node [fill,circle,scale=0.5,label=below:$$] (2) at (1,0) {};
\node [fill,circle,scale=0.5,label=below:$$] (3) at (2,0) {};
\node [fill,circle,scale=0.5,label=below:$$] (4) at (3.9,-0.7) {};
\node [fill,circle,scale=0.5,label=right:$$] (n) at (3.7,0.5) {};
\node [fill,circle,scale=0.5,label=below:$$] (3) at (3,0) {};

\node [fill,circle,scale=0.5,label=below:$$] (5) at (4.8,-1.3) {};
\draw [thick,-,jade] (1)-- (3);
\draw [thick,-,red] (3)-- (4);
\draw [thick,-,red] (4)-- (5);
\draw [thick,-,red] (5)-- (4.8,1.8);
\draw [thick,-,cyan] (3)-- (n);
\draw  [thick,-,jade] (0,0)-- (0,3.5);
\draw  [thick,-,jade] (1,0)-- (1,3.5);
\draw  [thick,-,jade] (2,0)-- (2,3.5);
\draw (3,0)-- (3,3.5);
\draw [cyan](3.7,0.5)-- (3.7,4);
\draw [thick,-,red] (3.9,-0.7)-- (3.9,2.8);
\fill[nearly transparent, green] (1.07,0.07) rectangle (1.93,2.92);
\fill[nearly transparent,green]  (2.93,2.92) rectangle (2.07,0.07);
\fill[nearly transparent, green] (0.07,0.07) rectangle (0.93,2.93);
\fill[nearly transparent,cyan] (3.07,0.12) -- (3.07,2.93)--(3.63,3.43)--(3.63,0.57);
\fill[nearly transparent,red] (3.07,0.02) -- (3.07,2.83)--(3.83,2.19)--(3.83,-0.59);
\fill[nearly transparent,red] (3.97,-0.7) -- (3.97,2)--(4.73,1.3)--(4.73,-1.2);
\end{tikzpicture}

\medskip

\begin{tikzpicture}
\node [fill,circle,scale=0.5,label=below:$$] (1) at (0,0) {};
\node [fill,circle,scale=0.5,label=below:$$] (2) at (1,0) {};
\node [fill,circle,scale=0.5,label=below:$$] (3) at (2,0) {};
\node [fill,circle,scale=0.5,label=below:$$] (4) at (3,0) {};
\node [fill,circle,scale=0.5,label=below:$$] (5) at (0,1) {};
\node [fill,circle,scale=0.5,label=below:$$] (6) at (1,1) {};
\node [fill,circle,scale=0.5,label=below:$$] (7) at (2,1) {};
\node [fill,circle,scale=0.5,label=below:$$] (8) at (0,2) {};
\node [fill,circle,scale=0.5,label=below:$$] (9) at (1,2) {};
\node [fill,circle,scale=0.5,label=below:$$] (10) at (2,2) {};
\node [fill,circle,scale=0.5,label=below:$$] (12) at (1,3) {};
\node [fill,circle,scale=0.5,label=below:$$] (13) at (2,3) {};
\node [fill,circle,scale=0.5,label=below:$$] (15) at (3,3) {};
\node [fill,circle,scale=0.5,label=below:$$] (16) at (3,2) {};
\node [fill,circle,scale=0.5,label=below:$$] (17) at (3,1) {};

\draw [thick,->, jade] (8)-- (5);
\draw [thick,->, jade] (5)-- (1);
\draw [thick,->, jade] (15)-- (13);
\draw [thick,->, jade] (9)-- (6);
\draw [thick,->, jade] (6)-- (2);
\draw [thick,->, jade] (12)-- (9);
\draw [thick,->, jade] (3)-- (2);
\draw [thick,->, jade] (13)-- (10);
\draw [thick,->, jade] (10)-- (7);
\draw [thick,->, jade] (7)-- (3);
\draw [thick,->, jade] (2)-- (1);
\draw [thick,->, jade] (4)-- (3);

\draw [thick,->] (15)-- (16);
\draw [thick,->] (16)-- (17);
\draw [thick,->] (17)-- (4);

\draw [thick,->, jade] (1)-- (6);
\draw [thick,->, jade] (6)-- (8);
\draw [thick,->, jade] (8)-- (12);
\draw [thick,->, jade] (13)-- (12);
\draw [thick,->, jade] (9)-- (13);
\draw [thick,->, jade] (10)-- (9);
\draw [thick,->, jade] (2)-- (10);
\draw [thick,->, jade] (3)-- (17);
\draw [thick,->, jade] (17)-- (7);
\draw [thick,->, jade] (7)-- (15);

\node [fill,circle,scale=0.5,label=below:$$] (18) at (5,0) {};
\node [fill,circle,scale=0.5,label=below:$$] (19) at (5,1) {};
\node [fill,circle,scale=0.5,label=below:$$] (20) at (5,2) {};
\node [fill,circle,scale=0.5,label=below:$$] (21) at (5,3) {};
\node [fill,circle,scale=0.5,label=below:$$] (22) at (6,0) {};
\node [fill,circle,scale=0.5,label=below:$$] (23) at (6,1) {};
\node [fill,circle,scale=0.5,label=below:$$] (24) at (6,2) {};
\node [fill,circle,scale=0.5,label=below:$$] (25) at (6,3) {};
\node [fill,circle,scale=0.5,label=below:$$] (26) at (7,0) {};
\node [fill,circle,scale=0.5,label=below:$$] (27) at (7,1) {};
\node [fill,circle,scale=0.5,label=below:$$] (28) at (7,2) {};

\draw [thick,->] (21)-- (20);
\draw [thick,->] (20)-- (19);
\draw [thick,->] (19)-- (18);
\draw [thick,->, red] (25)-- (24);
\draw [thick,->, red] (24)-- (23);
\draw [thick,->, red] (23)-- (22);
\draw [thick,->, red] (28)-- (27);
\draw [thick,->, red] (28)-- (25);
\draw [thick,->, red] (27)-- (26);
\draw [thick,->, red] (21)-- (25);

\draw [thick,->, red] (23)-- (28);
\draw [thick,->, red] (24)-- (21);
\draw [thick,->, red] (19)-- (24);
\draw [thick,->, red] (18)-- (22);
\draw [thick,->, red] (22)-- (26);
\draw [thick,->, red] (26)-- (23);
\draw [thick,->, red] (22)-- (19);

\node [fill,circle,scale=0.5,label=below:$$] (30) at (9,0) {};
\node [fill,circle,scale=0.5,label=below:$$] (31) at (9,1) {};
\node [fill,circle,scale=0.5,label=below:$$] (32) at (9,2) {};
\node [fill,circle,scale=0.5,label=below:$$] (33) at (9,3) {};
\node [fill,circle,scale=0.5,label=below:$$] (34) at (10,0) {};
\node [fill,circle,scale=0.5,label=below:$$] (35) at (10,1) {};
\node [fill,circle,scale=0.5,label=below:$$] (36) at (10,2) {};
\node [fill,circle,scale=0.5,label=below:$$] (37) at (10,3) {};

\draw [thick,->] (33)-- (32);
\draw [thick,->] (32)-- (31);
\draw [thick,->] (31)-- (30);
\draw [thick,->, cyan] (32)-- (36);
\draw [thick,->, cyan] (37)-- (36);
\draw [thick,->, cyan] (36)-- (35);
\draw [thick,->, cyan] (35)-- (34);
\draw [thick,->, cyan] (30)-- (34);
\draw [thick,->, cyan] (34)-- (31);
\draw [thick,->, cyan] (36)-- (31);
\draw [thick,->, cyan] (36)-- (33);
\draw [thick,->, cyan] (33)-- (37);

\end{tikzpicture}
 \caption{An example of a dimer model on the cylinder over $E_7$.  The quivers lie on their respective colored sheets, and they share the three black arrows where the sheets intersect. }
\label{fig:e7dimer}
\end{figure}

\begin{ex} 
Quivers for double Bruhat cells of $E_7$ can be drawn on a book-like structure as shown in ~Figure \ref{fig:e7dimer}. The cylinder $E_7 \times \RR$ has seven strings and three sheets: one sheet of length 3 (green in color), one sheet of length 2(red in color) and one sheet of length 1 (blue in color) glued together at their boundaries (the black string). 

\end{ex}

\subsection{Double Bruhat cells} 

Let $G$ be a Kac--Moody group. Let $B$ and $B_{-}$ be opposite borel subgroups and $W$ be its Weyl group. Then $G$ can be written as disjoint union of double Bruhat cells $G_{u,v}$ where, $G_{u,v}=BuB\cap B_{-}vB_{-}$ and $u,v \in W$.  To each such pair of Weyl elements $(u, v)$, we can associate a quiver $Q^{u,v}$ as defined in \cite{bfz05}.

\subsection{Berenstein--Fomin--Zelevinsky quivers}
Let $G$ be a simply connected complex algebraic group. Let $W$ be the Weyl group and $\mathfrak{g}$ be the Lie algebra of $G$. Every Weyl group can be realized as a Coxeter group with reflections ${s_1, s_2, \ldots, s_r}$ of simple roots as its generators. Each $s_i$ is an involution and $(s_is_j)^{m_{ij}} = 1 \in W$, for some integer $m_{ij}$ encoded in the Dynkin diagram.  Every element $w \in W$ has a smallest expression in terms of $s_i$'s.  A word is a tuple of indices of simple reflections in the smallest expression for $w$.   If for $w=s_{i_1}s_{i_2}\cdots s_{i_l}$ in $W$ is the smallest such expression in terms of the generators of $W$ then the word $i =(i_1, i_2, \cdots, i_l), \  i_j\in[1, \cdots, r]$ is said to be in its reduced form.  The length of the word $w$ is denoted by $\ell(w)$ and $\ell(w)= l$.

Fix a pair $(u,v) \in W \times W$. Let us use negative indices for the generators of the first copy of $W$ and positive indices for the second copy of $W$.  Then a reduced word $\textbf{i} = (i_1, \ldots, i_{\ell(u)+\ell(v)})$ is an arbitrary shuffle of a reduced word for $u$ and a reduced word for $v$. Let $\hat{\textbf{i}}=(-r, \ldots, -1, i_1, \ldots,  i_{\ell(u)+\ell(v)})$. For $k \in [-r, -1] \cup [1, \ell(u)+\ell(v)]$, $k^+$ is the smallest index $l$ such that $k<l$ and $|i_k|=|i_l|$.  If $|i_k|\neq |i_l|$ for any $l>k$, then $k^+= \ell(u)+\ell(v)+1$. An index is called $i$-exchangeable if both $k$ and $k^+$ are in $[-r, -1] \cup [1, \ell(u)+\ell(v)]$.

\begin{defn}\label{def:bfz} Let $u, v \in W$.  A BFZ quiver $Q^{u,v}$ has set of vertices $Q_0 = \hat{\textbf{i}}$  Vertices $k$ and $l$ such that $k < l$ are connected if and only if either $k$ or $l$ are $i$-exchangeable. There are two types of edges: 
\begin{itemize}
\item  An edge is called horizontal if $l = k^+$ and it is directed from $k$ to $l$ if and only if $\epsilon(i_l) =+1$.
\item An edge is called inclined if one of the following conditions hold: 
\begin{enumerate}
\item $l < k^+ < l^+$, $a_{|i_k|, |i_j|} > 0$, $\epsilon(i_l) = \epsilon(i_{k^+})$
\item $l < l^+ < k^+$, $a_{|i_k|, |i_j|} > 0$, $\epsilon(i_l) = -\epsilon(i_{l^+})$
\end{enumerate} 
An inclined edge is directed from $k$ to $l$ if and only if $\epsilon(i_l) =-1$.
\end{itemize}
\end{defn}
These quivers are used in \cite{bfz05} to produce a cluster structure on the coordinate rings of double Bruhat cells.
\begin{theorem} A BFZ quiver can be realized as a dimer model on the cylinder over the corresponding Dynkin diagram.
\end{theorem}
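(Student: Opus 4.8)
The plan is to verify directly that the BFZ quiver $Q^{u,v}$, drawn on the cylinder $\Gamma\times\RR$ in the natural way, satisfies the three conditions in the definition of a dimer model on the cylinder. First I would fix the drawing. Index the entries of $\hat{\textbf{i}}$ by $k\in[-r,-1]\cup[1,\ell(u)+\ell(v)]$ with $|i_{-p}|=p$ for the prefix letters, and for each such $k$ put $h(k)=\#\{\,l\le k:\ |i_l|=|i_k|\,\}$, its occurrence number. Send the vertex $k\in Q_0$ to the grid point $(|i_k|,h(k))\in\Gamma_0\times\NN\subset\Gamma\times\RR$. Then the string over a Dynkin vertex $p$ carries precisely the vertices $k$ with $|i_k|=p$, stacked in the order in which the index $p$ occurs in $\hat{\textbf{i}}$ (the $r$ prefix letters forming the bottom layer), and the projection $\Gamma\times\RR\to\Gamma$ sends $k$ to $|i_k|$.

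Next I would read off the arrows of Definition \ref{def:bfz} in these coordinates. A horizontal edge joins $k$ to $k^+$, and since $k^+$ is the next occurrence of the index $|i_k|$ we have $|i_{k^+}|=|i_k|$ and $h(k^+)=h(k)+1$, so it is a unit segment along the string over $|i_k|$ and projects onto that vertex of $\Gamma$. An inclined edge joins $k$ to $l$ with $a_{|i_k|,|i_l|}>0$, and here necessarily $|i_k|\neq|i_l|$: if $|i_k|=|i_l|$ then $k^+\le l$, which contradicts both $l<k^+<l^+$ and $l<l^+<k^+$ in Definition \ref{def:bfz}. Hence $|i_k|$ and $|i_l|$ are adjacent vertices of $\Gamma$, the edge may be drawn inside the sheet over a branch containing that Dynkin edge, and it projects onto that edge. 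This establishes the first condition, and shows in addition that every arrow of $Q^{u,v}$ lies inside a single sheet.

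It remains to analyse the faces, which is the core of the argument. I would localise to a single edge $\{p,q\}$ of $\Gamma$: deleting from $\hat{\textbf{i}}$ every letter whose index is not in $\{p,q\}$ and applying the recipe of Definition \ref{def:bfz} to the resulting word in $s_p,s_q$ reproduces the induced subquiver of $Q^{u,v}$ on the two columns over $p$ and $q$, because deleting letters of other indices changes neither the relation $k\mapsto k^+$ on the surviving letters nor the relative order of those letters. This rank-two (type $A$) quiver is planar in the corresponding strip, each of its bounded regions is bounded by a cyclically oriented cycle of $Q^{u,v}$, and each such cycle projects onto the single edge $\{p,q\}$. One then has to see that every face of $Q^{u,v}$ is of this local form, i.e.\ is confined between two adjacent columns: a face is a minimal cycle not subdivided by an arrow, all of whose arrows project (by the previous paragraph) into branches of $\Gamma$, and a cycle using arrows from two different branches would traverse the string over a common branching point at least twice and would therefore be subdivided, so it could not be a face. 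Granting this, the faces of $Q^{u,v}$ are exactly the union, over the edges of $\Gamma$, of the local faces above: each is oriented (the third condition, which for $Q^{u,e}$ is the structural statement recorded in the introduction) and each projects onto an edge of $\Gamma$ (the second condition). Since distinct sheets meet only along the strings over branching points, this union is disjoint --- which is precisely what later lets the superpotential be split as $\sum_r S_r$ and makes rigidity a per-sheet problem.

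I expect the face analysis just sketched to be the main obstacle. One must import the rank-two classification of faces and their orientations rigorously --- matching non-$i$-exchangeable vertices with the boundary of the strip, and checking that the manner in which the reduced words of $u$ and $v$ are shuffled, and the deletion of letters of the other branches, leaves intact the data used by Definition \ref{def:bfz} among the surviving letters, so that the restricted picture really is the standard planar one --- and one must rule out a cycle turning the corner at a branching point. For the latter the mechanism is that the inclined edges incident to a branching-point column fall, according to whether they satisfy $l<k^+<l^+$ or $l<l^+<k^+$ and the sign pattern $\epsilon$, into bundles each confined to one neighbouring branch, so that any cycle meeting two branches must cross the shared string twice and is cut by an arrow along it; minimal cycles are therefore never of mixed type. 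Once this local description and the no-corner-turning statement are in place, checking the three defining conditions globally is immediate.
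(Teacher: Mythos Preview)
The paper's proof of this theorem is three sentences and addresses only condition (1) of Definition~3.2: horizontal edges lie on strings, and inclined edges, which require $a_{|i_k|,|i_l|}>0$, lie between adjacent strings. Your treatment of condition (1) matches this. The face conditions (2) and (3) are not handled in this proof at all; the paper defers them to Lemma~3.1 (planarity in each sheet, proved by inequalities on the successor function $k\mapsto k^+$) and Lemma~3.2 (orientation of faces, proved only for $Q^{u,e}$, again by inequalities among $k,k^+,l,l^+,\ldots$).

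Where you do treat the face conditions, your route is genuinely different: you localize to a Dynkin edge $\{p,q\}$, identify the two-column subquiver with a rank-two BFZ quiver, and import planarity and orientation from the rank-two picture. This is conceptually clean and anticipates the decomposition $S=\sum_r S_r$, but there is a gap. Your no-corner-turning argument only excludes a face that meets two distinct \emph{branches}; it says nothing about a face that spans three or more columns \emph{within a single sheet} of length $\ge 2$. Ruling that out requires showing that any such cycle re-enters an interior column and is cut by a horizontal arrow there --- essentially the content of the paper's Remark that each $n$-face has $n-1$ vertices in one string and one vertex in the adjacent string --- and that needs the inequality analysis the paper carries out in its lemmas, not just the branch-level observation you give. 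A second point to watch: deleting the letters of other indices need not leave a \emph{reduced} word in $s_p,s_q$, so you must check that the combinatorial rules of Definition~\ref{def:bfz} still produce the expected planar, oriented two-column picture for arbitrary words, not only reduced ones.
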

\begin{proof} Notice that the horizontal edges in ~Definition \ref{def:bfz} lie on the strings of the cylinder over the Dynkin diagram.  All inclined edges lie between two adjacent strings such that they project down onto an edge of the Dynkin diagram.   According to the definition of the quiver, there is an edge between two vertices of adjacent strings only if the corresponding vertices in the graph are connected by an edge. 
\end{proof}

\subsection{BFZ quivers for type $A_n$}
A quiver for double Bruhat cells for $A_n$ can be viewed as a quiver on a plane on $A_n$ as shown below. All vertical edges in the quiver project onto vertices in the Dynkin diagram. All inclined edges project onto edges of the Dynkin diagram. 

\begin{ex}
Consider the Lie group of type $A_3$. The Weyl group in this case is $W=S_4$. Let $u=s_3s_2s_1s_2s_3, v=e \in S_4$. The quiver $Q^{u,v}$ corresponding to the double Bruhat cell $B_{u,v}$ is as shown below:

\begin{figure}[ht!]
\begin{tikzpicture}
\node [fill,circle,scale=0.5] (v2) at (0,0) {};
\node [fill,circle,scale=0.5] (v3) at (1,0) {};
\node [fill,circle,scale=0.5] (v4) at (2,0) {};
\node [fill,circle,scale=0.5] (v5) at (0,1) {};
\node [fill,circle,scale=0.5] (vn) at (0,2) {};
\node [fill,circle,scale=0.5] (v1) at (1,1) {};
\node [fill,circle,scale=0.5] (v6) at (1,2) {};
\node [fill,circle,scale=0.5] (v) at (2,1) {};
\node [fill,circle,scale=0.5] (1) at (0,-1) {};
\node [fill,circle,scale=0.5]  (2)at (1,-1) {};
\node [fill,circle,scale=0.5] (3) at (2,-1) {};
\node at (3,-1) {$A_3$};
\draw (1) -- (2);
\draw (2) -- (3);
\draw [thick,->] (v5)-- (v2);
\draw [thick,->] (v2)-- (v3);
\draw [thick,->] (v1)-- (v3);
\draw [thick,->] (v3) -- (v4) ;
\draw [thick,->] (vn)-- (v5);
\draw [thick,->](v6) -- (v1) ;
\draw [thick,->](v) -- (v4) ;
\draw [thick,->] (v3)-- (v5);
\draw [thick,->] (v5)-- (v6);
\draw [thick,->] (v6)-- (vn);
\draw [thick,->] (v)-- (v6);
\draw [thick,->] (v1)-- (v);
\draw [thick,->] (v4)-- (v1);
\end{tikzpicture}
\end{figure}

\end{ex}

\begin{lemma} A BFZ quiver $Q^{u,v}$ is planar in each sheet. 
\end{lemma}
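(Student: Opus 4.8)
The plan is to reduce planarity on a sheet to a purely local statement about two adjacent strings, and then extract that statement from Definition~\ref{def:bfz}.

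First I would fix a sheet, say the one over a branch $\Gamma_{m,n}$ of length $L$, and identify it with the strip $[0,L]\times\RR$ so that the strings over the successive vertices $p_0,\dots,p_L$ of the branch become the vertical lines $\ell_j=\{j\}\times\RR$. Place the vertex $k$ of $Q^{u,v}$ with $|i_k|=p_j$ on $\ell_j$ at height equal to the position $k$; then all heights are distinct integers and on each line the vertices occur in word order. By the realization of $Q^{u,v}$ as a dimer model on the cylinder proved above, every horizontal edge of the sheet-subquiver lies inside a single line $\ell_j$, and every inclined edge has its endpoints on consecutive lines $\ell_j,\ell_{j+1}$ and may be drawn inside the sub-strip $R_j=[j,j+1]\times\RR$ with its interior in the open part. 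Two horizontal edges on a common line join consecutive occurrences of one index, so their height-intervals have disjoint interiors and contain no vertex in their interiors; hence horizontal edges cross nothing, and arcs drawn in different $R_j$ meet at most at a shared endpoint on the separating line. Thus it suffices to prove, for each $j$, that the inclined edges with endpoints on $\ell_j$ and $\ell_{j+1}$ can be realized as pairwise non-crossing arcs in $R_j$.

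Next I would fix adjacent Dynkin vertices $p=p_j$, $q=p_{j+1}$ and analyze the inclined edges between their strings. Each is a pair $k<l$ with $\{|i_k|,|i_l|\}=\{p,q\}$ satisfying condition~(1) of Definition~\ref{def:bfz} ($k<l<k^+<l^+$, so the occurrences of $p$ and $q$ interleave) or condition~(2) ($k<l<l^+<k^+$, so one pair of occurrences nests inside the other), together with the sign constraint $\epsilon(i_l)=\epsilon(i_{k^+})$, respectively $\epsilon(i_l)=-\epsilon(i_{l^+})$. Drawing each such edge so that its vertical extent in $R_j$ is the height-interval $[k,l]$, I would check that the family of these intervals, together with the datum of which of $\ell_j,\ell_{j+1}$ carries the lower endpoint $k$, contains no crossing pair. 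The decisive input is that $k^+$ (respectively $l^+$) is by definition the \emph{first} later position carrying the same index as $k$ (resp.\ $l$), so no occurrence of that index lies strictly between $k$ and $k^+$; an attempted crossing of two inclined edges then forces either a second occurrence of one index strictly between consecutive occurrences of the other, or a sign equality forbidden by the $\epsilon$-constraint in~(1)/(2). Organizing this by which of~(1)/(2) each of the two edges comes from and which of $p,q$ is the index of the lower endpoint leaves a short, largely symmetric case check — the only genuine computation in the proof; as a sanity check, when $v=e$ all signs equal $-1$, only case~(1) survives, and the bookkeeping collapses.

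I expect this case check to be the main obstacle, since one must track the four successor positions of the two edges together with the sign function $\epsilon$, and the signs genuinely matter: they record how the reduced word $\textbf{i}$ shuffles a reduced word for $u$ with one for $v$. Two minor loose ends remain: if $k^+$ or $l^+$ equals $\ell(u)+\ell(v)+1$ then the relevant vertex is not $\textbf{i}$-exchangeable and the would-be edge is simply absent, which only removes edges and so cannot create crossings; and any double arrows between a fixed pair of vertices are drawn as parallel arcs and are harmless. Finally I would remark that the whole argument never involves more than two adjacent strings at once — an $A_2$ parabolic's worth of data — which is precisely why branch points elsewhere in $\Gamma$ are irrelevant and planarity can be decided one sheet at a time; for $\Gamma=A_n$ this recovers the familiar planarity of the quiver of a double wiring diagram.
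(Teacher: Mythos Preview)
Your approach is essentially the paper's: reduce planarity on a sheet to the claim that no two inclined edges between a fixed pair of adjacent strings can cross, and rule out crossings using the order constraints in Definition~\ref{def:bfz}. Your strip embedding and the observation that horizontal edges cause no trouble are a cleaner setup than the paper gives, but the core step is the same contradiction argument.

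Two points of comparison are worth noting. First, the paper does not organise the contradiction as a case split over conditions~(1)/(2); instead it fixes one inclined edge (between $l$ and $k^+$) and a putative crossing edge (between $k$ and $l^{m+}$), and shows that the first forces $k^+<l^+$ while the second forces $l^+<k^+$. This is the same ``no index can recur between consecutive occurrences of itself'' mechanism you isolate, just packaged as one direct contradiction rather than a symmetric case check. Second, and more substantively, the paper's argument never invokes the sign function~$\epsilon$: the positional inequalities from the inclined-edge conditions alone already give the contradiction. So your expectation that ``the signs genuinely matter'' is stronger than what is actually needed here; if you carry out your case check you should find that the $\epsilon$-constraints can be dropped and only the order relations $l<k^+<l^+$ or $l<l^+<k^+$ are used. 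That will shorten the bookkeeping you were anticipating as the main obstacle.
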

\begin{proof}
Consider the $k$th and the $l$th string of the quiver.  If the strings are not adjacent on a sheet, then we know that there cannot be edges between the vertices of the strings.  If the strings are adjacent, consider the following diagram:
\begin{center}
$\begin{tikzcd}
k \ar{r} & k^{+} \ar{dl}\ar{r} & k^{++} \ar{r} & \cdots \ar{r} & k^{p+}\\ 
l \ar{r}  & l^{+} \ar{r} & l^{++} \ar{r} & \cdots \ar{r} & l^{r+} 
\end{tikzcd}$
\end{center}

Suppose the vertices $l$ and $k^{+}$ are connected.  Then depending on whether $k^{+}< l$ or $l< k^{+}$, there will be the following inequalities:\\
(1) If  $k^{+}< l$, then  $ l<k^{++}<l^{+}$ \\
(2) If  $l< k^{+}$, then $k^{+}< l^{+}<k^{++}$. \\
(3) So in both cases above, $k^+ < l^+$. \\
 We want to show that the vertex $k$ is not connected to $l^{m+}$ for any $m$. Suppose $k$ and $l^{m+}$ are connected.  Then again, there are two cases: \\
(4) If  $l^{m+}<k $, then  $ k<l^{(m+1)+}<k^{+}$ \\
(5) If  $k< l^{m+}$, then $l^{m+}< k^{+}<l^{(m+1)+}$. \\
(6) Combining inequalities in (4) and (5) with $l^+<l^{m+}$ we get, $l^+ <k^+$. \\
As (3) and (6) contradict each other, there cannot be an overlapping edge.  Hence the quiver is planar in each sheet.
\end{proof}

\begin{lemma} For any Kac--Moody algebra $\mathfrak{g}$ and $(u,e) \in W \times W$, all faces of $Q^{u,e}$ are oriented, where $e$ is the identity element in $W$. 
\end{lemma}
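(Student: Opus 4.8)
The plan is to reduce the assertion to a local statement about the two strings lying over a single edge of $\Gamma$, and there to read everything off the sign conventions in Definition~\ref{def:bfz}. For the reduction: a horizontal arrow of $Q^{u,e}$ lies on a string and hence projects to a vertex of $\Gamma$, while an inclined arrow runs between two adjacent strings and projects to an edge of $\Gamma$. Since, by the previous lemma, $Q^{u,e}$ is planar in each sheet, a minimal cycle cannot mix arrows projecting onto two different edges of $\Gamma$ — such a cycle would have to leave the string over a branching vertex into one sheet and re-enter it from another, which planarity in each sheet forbids. So every face of $Q^{u,e}$ is a face of the planar subquiver carried by one pair of adjacent strings $i\sim j$, and it is enough to analyze that picture. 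Throughout I write $\mathrm{pos}(v)$ for the index of a vertex $v$ in $\hat{\mathbf i}$.

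First I would fix the arrow directions. Because $v=e$, the word $\mathbf i$ is drawn entirely from the first copy of $W$, so $\epsilon(i_k)=-1$ for every non-frozen index $k$. Plugging this into Definition~\ref{def:bfz}: every horizontal arrow between $k$ and $k^{+}$ is directed $k^{+}\to k$; the inclined condition~(2) is never satisfied, so every inclined arrow is produced by condition~(1) and, since $k<l$ means $\mathrm{pos}(k)<\mathrm{pos}(l)$, is directed from its endpoint of smaller $\mathrm{pos}$ to its endpoint of larger $\mathrm{pos}$ (the frozen vertices at the bottoms of the strings being treated the same way). In short, along each string the horizontal arrows all point in the direction of decreasing $\mathrm{pos}$, while every inclined arrow points in the direction of increasing $\mathrm{pos}$.

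The combinatorial heart is to pin down the faces. From condition~(1) one reads off, exactly as in the planarity lemma, that a vertex $w$ is joined by an inclined arrow to a vertex $p$ on the opposite string precisely when there is an occurrence of $p$'s letter strictly between $w$ and the next occurrence after $w$ on $w$'s own string; and when this happens $w$ is joined to exactly two such $p$'s, namely the last occurrence of that letter before $w$ and the last occurrence before that next occurrence. Now let $F$ be a face; on each string it meets the string in a (possibly trivial) run of consecutive occurrences joined by horizontal arrows, and by planarity the two inclined arrows of $\partial F$ join the bottom of one run to the bottom of the other and the top to the top. If both runs had length $\ge 1$ (two or more vertices) one gets a contradiction: label the strings so that the bottom vertex $p_1$ of one run has smaller $\mathrm{pos}$ than the bottom vertex $q_1$ of the other; condition~(1) for the inclined arrow between $p_1$ and $q_1$ forces $\mathrm{pos}(q_1)<\mathrm{pos}(p_2)<\mathrm{pos}(q_2)$, and then chordlessness of $F$ forces $p_3,p_4,\dots,p_{k+1}$ all to precede $q_2$ in $\mathrm{pos}$ (otherwise $q_1$ would be joined by an arrow to one of $p_2,\dots,p_k$, a chord of $F$), whereas the inclined arrow at the top of the two runs forces $p_{k+1}$ to come after that top vertex, which is at least $q_2$ — a contradiction. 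Hence one run is a single vertex $w$, the apex, and $F$ consists of a horizontal run $p_1,\dots,p_k$ on one string together with $w$ and the two arrows between $w$ and $p_1$, $p_k$; moreover $w$ being joined to both $p_1$ and $p_k$ means $\{p_1,p_k\}$ is the pair (last occurrence before $w$, last occurrence before the next one after $w$), so $\mathrm{pos}(p_1)<\mathrm{pos}(w)<\mathrm{pos}(p_k)$.

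With this in hand the conclusion is immediate: by the sign analysis the run is a directed path $p_k\to\cdots\to p_1$, and since $\mathrm{pos}(p_1)<\mathrm{pos}(w)<\mathrm{pos}(p_k)$ the two inclined arrows are $p_1\to w$ and $w\to p_k$, so $\partial F$ is the directed cycle $p_k\to\cdots\to p_1\to w\to p_k$; thus every face of $Q^{u,e}$ is oriented. I expect the main obstacle to be the third step: extracting the precise list of inclined arrows from condition~(1), then the chord argument ruling out ``fat'' faces and locating the apex, together with the bookkeeping of the various interleaving patterns, the frozen vertices at the bottoms of the strings, the topmost occurrences of a letter (where $k^{+}$ leaves the word), runs of length one, and apexes sitting on a string over a branching vertex where two sheets meet. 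The uniform sign $\epsilon\equiv-1$ forced by $v=e$ is exactly what keeps all these cases parallel, and is why only condition~(1) ever contributes.
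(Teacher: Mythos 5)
Your route is essentially the paper's own argument run in the opposite direction: the paper assumes a non-oriented cycle sitting on two adjacent strings and shows the interleaving condition (1) of Definition~\ref{def:bfz} forces inclined edges that subdivide it, while you classify the undivided cycles (faces) directly as ``one horizontal run plus an apex'' and read off orientation from the uniform arrow directions forced by $v=e$ (all horizontal arrows toward decreasing position, all inclined arrows toward increasing position, only condition (1) active). That sign analysis, the reduction to a single pair of adjacent strings, and the observation that interleavings of condition (1) create chords are exactly the paper's ingredients; as a bonus your chord argument proves the Remark that an $n$-face has $n-1$ vertices on one string and one on the other, which the paper states without proof.

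There is, however, one concrete slip in your final case analysis. After chordlessness gives $\mathrm{pos}(p_i)<\mathrm{pos}(q_2)$ for all vertices of the $p$-run, you claim the top inclined arrow between $p_{k+1}$ and $q_b$ ``forces $p_{k+1}$ to come after that top vertex.'' Condition (1) does not force this: the arrow can equally arise from the pattern $p_{k+1}<q_b<p_{k+1}^{+}<q_b^{+}$, which is perfectly compatible with $\mathrm{pos}(p_{k+1})<\mathrm{pos}(q_2)$, so the stated contradiction does not yet appear. The fix is immediate with your own mechanism: in that pattern $q_1<p_{k+1}<q_1^{+}=q_2\le q_b<p_{k+1}^{+}$, so condition (1) applied to the pair $(q_1,p_{k+1})$ produces the inclined edge $q_1\to p_{k+1}$, which is not one of the two boundary inclined edges (since $q_1\neq q_b$) and is therefore a chord of $F$ --- but note your parenthetical explicitly restricted the chord candidates to $p_2,\dots,p_k$, excluding exactly this edge. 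Adding $p_{k+1}$ to that list (equivalently, taking the chord from $q_1$ to the last occurrence of the $p$-letter before $q_2$, wherever it falls in the run) closes the gap; with that repair the classification, and hence the orientedness of every face, goes through. A smaller point, at the same level of informality as the paper itself: the claims that a face has exactly two inclined arrows joining bottom-to-bottom and top-to-top, and that no face mixes two sheets, are asserted from planarity rather than argued, and deserve a sentence each in a final write-up.
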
 
\begin{proof} Let us assume that there exists a non-oriented $n$- cycle in the quiver with $p+1$ vertices in $j$th string, $r+1$ vertices in (a neighboring) $k$th string and $n=p+r+2$. Note that all edges in all strings are directed in one direction as we are fixing one of the Weyl group elements to be the identity. Two edges between the neighboring strings can be directed in the same or opposite direction (as shown below).  Let us consider the first case where the vertical edges have the same direction. 
\begin{center}
$\begin{tikzcd}
j \ar{r} & j^{+} \ar{r} & j^{++} \ar{r} & \cdots \ar{r} & j^{p+}\\ 
k \ar{r} \ar{u} & k^{+} \ar{r} & k^{++} \ar{r} & \cdots \ar{r} & k^{r+} \ar{u}
\end{tikzcd}$
\end{center}

Let $r\leq p$. From the direction of the vertical arrows, it is clear that $j<k<j^+<k^+$ and $j^{p+}<k^{r+}<j^{(p+1)+}<k^{(r+1)+}$. We also know that $j<j^+<\cdots <j^{p+}<j^{(p+1)+}$ and $k<k^+<\cdots <k^{r+}<k^{(r+1)+}$.

Each inequality  $j^{m+}<\cdots<j^{(m+s)+}<k^{n+}< \cdots<k^{(n+t)+}<j^{(m+s+1)}$ creates an edge from $j^{(m+s)+}$ to $j$. For every such inequality, notice that we get one or more edges in the $n$-cycle which divides the cycle into smaller oriented cycles.  The second case where the two edges between the neighboring strings have opposite directions follows similarly from corresponding inequalities. \scshape
\end{proof}

\begin{remark} Each $n$-face in a quiver has $n-1$ vertices in one string and the remaining one vertex in its adjacent string.
\begin{center}
$\begin{tikzcd}
j \ar{r} & j^{+} \ar{r} & j^{++} \ar{r} & \cdots \ar{r} & j^{p+} \ar{dll}\\ 
 &  & k \ar{ull} 
\end{tikzcd}$
\end{center} 
\end{remark} 

\section{Rigidity of the superpotential}
In this section, we will use the planarity of a dimer models on a cylinder to show that its superpotential is rigid, in certain cases.  Recall that a potential of a quiver is a linear combination of cycles in the quiver.  The potential  \[S= \sum{\text{clockwise oriented faces}}-\sum{\text{anti-clockwise oriented faces}}\] is called the superpotential of the quiver $Q$. 

\begin{remark} Each vertex of a quiver $Q^{u,e}$ has at most one edge going to and at most one edge coming from each adjacent string.
\end{remark} 
As an application of the theory of dimer models on cylinders we give an independent proof of the following result of \cite{birs09}:
\begin{theorem}The superpotential $S$ of the quiver $Q^{u,e}$ is a rigid potential. 
\end{theorem}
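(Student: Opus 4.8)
The plan is to reduce the global rigidity statement to a local statement on each sheet, exploiting that the quiver is a dimer model on the cylinder. By the lemmas already established, $Q^{u,e}$ is planar in each sheet and every face is oriented; moreover two sheets meet only along a gluing string and hence share no faces. Consequently the superpotential decomposes as $S = \sum_r S_r$, where $S_r$ is the superpotential of the subquiver $Q_r$ on the $r$th sheet, and for an arrow $a$ lying in sheet $r$ we have $\partial_a S = \partial_a S_r$ (arrows on a gluing string belong to two adjacent sheets, but any face through them still lies in a single sheet, so the formula persists). Thus it suffices to prove that each $S_r$ is rigid in its own planar sheet, i.e. that every oriented cycle $C$ in $Q_r$ is cyclically equivalent to an element of the Jacobian ideal $J(S_r)$.

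**The first step** is to prove rigidity for faces. I would argue by induction on the number of faces enclosed by an oriented cycle. A face $F$ adjacent to the boundary of the sheet shares an arrow $a$ with the boundary; since $F$ is the unique internal face through $a$, the cyclic derivative $\partial_a S_r$ equals (up to sign and cyclic equivalence) the path $F a^{-1}$, i.e. the complementary path of $F$. Multiplying by $a$ and using the path-algebra relations shows $F \equiv a\,\partial_a S_r \in J(S_r)$; this is exactly the observation flagged in the introduction that boundary faces lie in the Jacobian ideal. For the inductive step, given any face $F$, one of its arrows $a$ is shared with a neighbouring face $F'$ that is closer to the boundary (or already known to be in $J(S_r)$); then $\partial_a S_r \equiv \pm(\,\text{complement of }F\,) \pm (\,\text{complement of }F'\,)$, so $a\,\partial_a S_r \equiv \pm F \pm F' $ modulo cyclic equivalence, and since $F' \in J(S_r)$ by induction we get $F \in J(S_r)$. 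Here I would use the Remark that each $n$-face has $n-1$ vertices on one string and one on the adjacent string to pin down the explicit shape of each face's complementary path and to justify that the adjacency graph of faces within a sheet is connected with the boundary faces as the "outermost" layer.

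**The second step** handles an arbitrary oriented cycle $C$ in $Q_r$. Because $C$ is oriented and the sheet is planar, $C$ bounds a region that is tiled by finitely many faces $F_1, \dots, F_t$ of $Q_r$. The standard planar-dimer identity then gives $C \equiv \sum_i \pm F_i$ modulo cyclic equivalence (the internal arrows cancel in pairs, one contribution from each of the two faces sharing it); this is the statement "every cycle can be written in terms of the faces it contains" from the introduction. Since each $F_i \in J(S_r)$ by Step 1, we conclude $C \in J(S_r)$. Combining over all sheets, every oriented cycle of $Q^{u,e}$ lies in $J(S)$; and since Lemma (all faces of $Q^{u,e}$ are oriented) together with the two-step argument covers faces and non-self-intersecting oriented cycles, and every cycle is oriented for $Q^{u,e}$, this exhausts all cycles. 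Hence $S$ is rigid.

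**The main obstacle** I anticipate is the bookkeeping in Step 1: making precise the claim that for a boundary arrow $a$ the derivative $\partial_a S_r$ is \emph{exactly} the complementary path of the unique face through $a$ (with the correct sign coming from the clockwise/anti-clockwise split), and that for an interior arrow it is the signed difference of the two complementary paths. This requires a careful orientation/sign convention on the sheet and a verification that no arrow is shared by more than two faces — which is where planarity of the sheet (the Lemma) and the explicit "$n-1$ on one string, one on the other" face structure (the Remark) do the real work. Once the sign conventions are fixed, the induction and the cycle-decomposition are routine.
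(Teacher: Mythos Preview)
Your reduction to sheets and your Step~1 are exactly the paper's argument (its Lemma~4.1): induct on the distance of a face from the boundary, using that for a boundary arrow $a$ one has $F \equiv a\,\partial_a S_r \in J(S_r)$, and for an interior arrow $a$ shared by $F$ and $F'$ one has $a\,\partial_a S_r \equiv \pm F \pm F'$ up to cyclic equivalence.

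Step~2, however, contains a genuine gap. The ``standard planar-dimer identity'' you invoke, namely $C \equiv \sum_i \pm F_i$ modulo cyclic equivalence with internal arrows cancelling in pairs, is a statement about $1$-chains in a cellular complex, not about elements of the path algebra $\CC Q_r$. In $\CC Q_r$ the cycle $C$ is a single monomial, while $\sum_i \pm F_i$ is a linear combination of distinct monomials of (in general) different lengths; these are never cyclically equivalent. So the step ``$C \equiv \sum_i \pm F_i$, each $F_i \in J(S_r)$, hence $C \in J(S_r)$'' does not go through.

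What the paper does instead is multiplicative rather than additive. It first proves a combinatorial lemma (Lemma~4.2) specific to the face structure you quote in the Remark: every non-self-intersecting oriented cycle $C$ has an interior arrow $e$ such that $C = p_1 p_2$ with $e\,p_1$ a face. Then (Lemma~4.3) one uses the Jacobian relation $\partial_e S_r = p_1 - p_1'$ to replace $p_1$ by the complementary path $p_1'$ through the adjacent face, obtaining a cycle $p_1' p_2$ enclosing strictly fewer faces. Iterating peels faces off one at a time until a single face remains, which lies in $J(S_r)$ by Step~1. The non-obvious content you are missing is precisely Lemma~4.2: that such a ``differentiable'' edge always exists. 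Your proposal neither proves nor references this, and without it the induction on the number of enclosed faces cannot start.
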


We first prove that the sub-potential of the superpotential $S$ lying in each sheet is rigid. Recall that two sheets are glued at a string, hence the sub-potentials share edges between them, but they do not share any faces of the quiver. Therefore, rigidity of the sub-potentials indeed implies rigidity of the superpotential. Denote by $S_r$, the sub-potential of the superpotential $S$ that lies on the $r$ th sheet. In order to prove rigidity of $S_r$, we need to show that each cycle in the quiver belongs to the Jacobian ideal $J(S_r)$. 

\begin{lemma} Every face belongs to $J(S_r)$.
\end{lemma}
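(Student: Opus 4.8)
The plan is to carry out the classical dimer-model induction, but localized to the finite planar quiver $Q_r$ that $Q^{u,e}$ cuts out inside the $r$th sheet $\Gamma_{m,n}\times\RR$; by the planarity lemma $Q_r$ is planar, so its faces are exactly the bounded complementary regions of its embedding and each arrow borders at most two of them. The computational heart is one elementary identity. Fix an arrow $a$; for a face $F$ containing $a$, write $p_F$ for the path along $F$ from the head of $a$ back to its tail, so that $F$ is cyclically equivalent to $a\,p_F$ and $\partial_a F = p_F$, while $\partial_a F' = 0$ for any face $F'$ not containing $a$. Summing over faces with their signs $\varepsilon_F = \pm 1$ in $S_r$ gives $\partial_a S_r = \sum_{F \ni a}\varepsilon_F\, p_F$ (every face meeting $a$ lies on sheet $r$, since $Q_r$ is planar), hence
\[
a\,\partial_a S_r \;=\; \sum_{F\ni a}\varepsilon_F\, F \quad\text{in } J(S_r),
\]
all equalities read up to cyclic equivalence.

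First I would dispatch the base case. Since $Q_r$ is finite and planar in the sheet, its unbounded complementary region is bordered by at least one arrow $a_0$ — for instance an arrow lying on the leftmost or rightmost string of the sheet, or an outermost inclined arrow, which exists because every face lies between two adjacent strings (the Remark above). Such an arrow lies on exactly one face $F_0$, so the displayed identity reads $a_0\,\partial_{a_0}S_r = \varepsilon_{F_0}F_0$, whence $F_0 = \varepsilon_{F_0}\,a_0\,\partial_{a_0}S_r \in J(S_r)$.

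Then comes the induction on faces. Form the dual graph of $Q_r$: one vertex per bounded face plus a vertex $*$ for the unbounded region, with an edge between two vertices whenever the corresponding regions share an arrow of $Q_r$. This graph is connected (working inside each connected component of $Q_r$ if necessary), so fix a spanning tree rooted at $*$ and list the faces $F_1,\dots,F_N$ in an order compatible with the tree (each face after its parent). If the parent of $F_j$ is $*$, then $F_j$ borders the unbounded region and $F_j\in J(S_r)$ by the base case. Otherwise the parent is some $F_i$ with $i<j$, and $F_j$ shares an arrow $a$ with $F_i$; since $a$ lies in exactly the two faces $F_i,F_j$, the identity gives $a\,\partial_a S_r = \varepsilon_{F_i}F_i + \varepsilon_{F_j}F_j \in J(S_r)$, and as $F_i\in J(S_r)$ by induction and $\varepsilon_{F_j}=\pm1$, we get $F_j\in J(S_r)$. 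This proves the lemma.

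I expect the only real obstacle to be the bookkeeping underlying the displayed identity: one must confirm that within $Q_r$ no arrow repeats inside a single face (so $\partial_a F$ is a single path rather than a sum — this uses planarity and that the faces are non-self-intersecting oriented cycles), that an arrow indeed borders at most two faces, and that the rotations implicit in $\partial_a$ are tracked consistently so that $a\,p_F$ really recovers $F$ up to cyclic equivalence. It is worth noting that the argument never uses any compatibility between the signs of adjacent faces: whether $\varepsilon_{F_i}$ and $\varepsilon_{F_j}$ agree or not, the conclusion $F_j\in J(S_r)$ follows from $F_i\in J(S_r)$, which is why the same scheme works verbatim on every sheet.
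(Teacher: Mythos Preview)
Your proof is correct and takes essentially the same approach as the paper's: both establish the identity $a\,\partial_a S_r \equiv \sum_{F\ni a}\varepsilon_F\,F$ (up to cyclic equivalence), use it to place boundary faces in $J(S_r)$ directly, and then propagate inward via shared arrows. The only cosmetic difference is the induction scaffolding---the paper inducts on a distance-from-the-boundary function $d(F)$ while you traverse a spanning tree of the dual graph rooted at the unbounded region---but these organize exactly the same step.
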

\begin{proof} We will prove this by induction on the distance of a face from the boundary of the quiver. The distance of a face $F$ is denoted by $d(F)$ and defined as follows: $d(F)=0$ if $F$ has a boundary edge as one of its edges. If $F$ is not a boundary face, then $d(F)=d + 1,$ where $d= \min\{{d(F^{\prime}) | \\\ F^{\prime}\text{ is an adjacent face to $F$}}\}$.

Now, if $d(F)=0$, $F$ has a boundary edge as one of its edges. Let us call the edge $f$, then $F=f\partial_f(S_r)\in J(S_r)$, which implies all boundary faces are in the Jacobian ideal. Suppose $d(F)=n+1$, then $F$ has at least one adjacent face whose distance is $n$. Let that face be $E$ and $e$ be the edge shared by $E$ and $F$. As $d(E)=n$, by induction, $E \in J(S_r)$, and by the definition of the Jacobian ideal, $e\partial_e(S_r)=E+F \in J(S)$, therefore $F \in J(S_r)$.  Hence all faces with distance $n+1$ are in the Jacobian ideal, which completes the proof by induction.
\end{proof}
Note that if a cycle is self-intersecting, it can be written a product of two or more non-self-intersecting cycles.  If we want to show that the original cycle belongs to the Jacobian ideal, then it suffices to prove that one of its non-self-intersecting cycles belongs to the ideal. 

\begin{defn} A cycle $C$ is called differentiable with respect to an edge $e$ if $e$ separates the cycle into a face and a smaller cycle. 
\end{defn}
As the quiver is planar in each sheet, we know  that the edge $e$ is shared by at most two faces, say $F_1$ and $F_2$. If $C$ is differentiable with respect to the edge $e$, then $C$ contains all edges of either $F_1$ or $F_2$ except $e$.
\begin{lemma} Every non-self-intersecting cycle in the quiver is differentiable with respect to at least one edge in its interior. 
\end{lemma}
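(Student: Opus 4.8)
The plan is as follows. Since $C$ is non-self-intersecting it lies in a single sheet (an excursion into a second sheet would have to travel along the common string, whose arrows belong to both sheets anyway), so by planarity of $Q_r$ in that sheet $C$ bounds a closed disc $\overline R$ whose interior is tiled by the faces of $Q_r$ that $\overline R$ contains. If $C$ is itself a face there is nothing to prove, so assume not; then $\overline R$ contains at least two faces and hence at least one arrow lying in the interior of $C$. What I want is an interior chord $e$ of $C$ together with a face $F\subseteq\overline R$ such that $\partial F\setminus\{e\}\subseteq C$. Using the fan shape of faces from the Remark — each face has all but one of its vertices on a single string, the \emph{spine} (a directed path of string-arrows), and one further \emph{apex} vertex on an adjacent string joined to the two ends of the spine by two inclined arrows — I would reduce this to finding a face $F\subseteq\overline R$ whose \emph{entire spine lies on} $C$. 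For such an $F$, the only arrows of $\partial F$ that might be off $C$ are its two inclined arrows; they are not both on $C$ (otherwise $\partial F\subseteq C$, and a simple cycle has no proper subcycle, so $C=\partial F$, contradicting that $C$ is not a face); so exactly one of them, $e$, is off $C$, the set $\partial F\setminus\{e\}$ (the spine together with the remaining inclined arrow) is a single directed arc of $C$, and $e$ completes this arc to the oriented boundary $\partial F$ — which is exactly the assertion that $C$ is differentiable with respect to the interior arrow $e$.

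To produce such an $F$ I would argue by extremality in the strip structure of the sheet. A branch of the Dynkin diagram is a path, so its strings are linearly ordered $\sigma_0,\sigma_1,\sigma_2,\dots$, each arrow of $Q_r$ lies on one string or joins two consecutive strings, and — as one Weyl element is the identity — all arrows on a fixed string are parallel. Let $\sigma_a$ be the leftmost string met by $C$. Then $C$, and hence the disc $\overline R$, lies in the closed half-strip to the right of $\sigma_a$, so any spine-arrow lying on $\sigma_a$ bounds on its left a face outside $\overline R$; consequently, if some face of $\overline R$ has a spine-arrow on $\sigma_a$ then its whole spine lies on $C$ and we are done. If $C$ has an arrow on $\sigma_a$, the face of $\overline R$ on the inner side of that arrow has it in its spine and we can finish (noting that when $\overline R$ lies in a single strip the symmetric argument at the right-hand string also applies). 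The remaining case, that $C$ meets $\sigma_a$ only at isolated vertices, I would treat by a local analysis at such a vertex $w$: its two incident arrows of $C$ are then the two inclined arrows of a unique face $F_0\subseteq\overline R$ wedged at $w$, and — using that $w$ has at most one incoming and one outgoing arrow to $\sigma_{a+1}$ — one either finds $F_0$ to be a triangle (so one of its inclined arrows is already the desired chord) or recurses into the thinner region obtained by deleting $F_0$.

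The main obstacle is exactly this last step: turning the extremality idea into a clean argument that, in every case, some face of $\overline R$ has its spine on the boundary $C$, i.e.\ disposing of the configurations in which $C$ touches its extreme strings only at apices of fan-faces. This is a finite but delicate analysis that leans on the fan shape of every face, on the parallelism and ``at most one arrow to/from each adjacent string'' properties from the Remarks, and on planarity of $Q_r$ in the sheet. Everything after the chord $e$ is found is routine: the smaller cycle into which $e$ separates $C$ is obtained by replacing the arc $\partial F\setminus\{e\}$ of $C$ by the complementary boundary arc of the other face meeting $e$ — well-defined because faces are oriented — and it encloses one fewer face than $C$, which is precisely the input the subsequent induction on the number of enclosed faces needs, together with the previous lemma, to conclude that every cycle of $Q_r$ lies in $J(S_r)$.
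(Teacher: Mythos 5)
Your overall strategy is the same as the paper's -- use the strip structure of a sheet to find a face all but one of whose arrows lie on $C$, the missing arrow being an interior chord -- but the proposal does not actually complete the key step, and you say so yourself: the case in which $C$ meets the extreme string only at isolated (apex) vertices is left as ``a finite but delicate analysis'' with a vague recursion. That is precisely the case your choice of extremal object creates. The paper avoids it by extremizing over something slightly different: not the left-most \emph{string met by} $C$, but the right-most \emph{vertical (string) path contained in} $C$. With that choice the extremal object is by construction a run of string arrows $v_2\to\cdots\to v_n$ of $C$, the arrow of $C$ entering $v_2$ is an inclined arrow $e_1:v_1\to v_2$ from the adjacent string, and the oriented face with apex $v_1$ attached to $e_1$ has its spine along an initial segment of this path; non-self-intersection ($v_1\neq v_{n+1}$) forces its closing inclined arrow $e_m:v_m\to v_1$ to be interior to $C$, and that is the chord. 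So the unresolved branch of your case analysis is exactly what the paper's reformulation of the extremality dissolves; as written, your argument has a genuine hole there.

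Two further local points would need repair even in the branches you do treat. First, from ``the two inclined arrows of $F$ are not both on $C$'' you conclude ``exactly one of them is off $C$''; that inference is invalid -- a priori both could be off $C$ (the cycle could enter and leave the spine along string arrows beyond the face), and ruling this out requires the maximality of the string run of $C$ together with the fact that the top vertex of the spine has at most one incoming inclined arrow from the adjacent string, which forces that arrow to coincide with the arrow of $C$ entering the run. Second, in the ``triangle'' sub-case at an apex vertex $w$, both inclined arrows of $F_0$ are arrows of $C$, so neither can be the chord; the chord is the spine arrow of $F_0$ on $\sigma_{a+1}$. Relatedly, the proposed recursion ``delete $F_0$ and pass to the thinner region'' does not reproduce the hypotheses: replacing the wedge $x\to w\to y$ of $C$ by the remaining boundary of $F_0$ traverses that spine against its orientation, so the resulting closed walk is not a directed cycle of the quiver and the induction cannot be applied to it as stated.
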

\begin{proof} Let $C$ be a cycle containing $k$ faces, $F_1, F_2, \ldots, F_k$. Suppose $F_i$ has $n_i$ vertices and $n_i$ edges. We need to show that $C$ contains all but one edge of $F_j$ for some $j$. 

Recall that each face $F_i$ has one of its vertices in a string and the remaining $n_i-1$ vertices in its adjacent string. Each vertex has at most one edge going to and at most one edge coming from each adjacent string. Lastly, every edge in each string is directed in the same direction. 

Let $p:v_2\to v_n$ be the right-most vertical path in cycle $C$. Suppose $p$ belongs to the $k$th string, $r_k$ of the quiver. This path $p$ has exactly one inclined edge $e_1:v_1\to v_2$ from the string to its left, $r_{k-1}$. That means, the face that contains $p$ and $e$, has its vertex $v_1$ in string $r_k$ and all remaining vertices in $r_{k-1}$ that belong to path $p$. As $p$ is the right-most vertical path of $C$, the edge $e_n:v_n\to v_{n+1}$ lands in the string $r_{k-1}$. If $v_1= v_{n+1}$, the cycle is self-intersecting. Hence $v_1\neq v_{n+1}$. So, for some $2<m<n$, there is an edge $e_m:v_m \to v_1$,  which lies in the interior of $C$ and completes a face in the quiver. This edge $e_m$ separates $C$ into a face (consisting of  vertices $v_1, v_2, \ldots, v_m$) and a smaller cycle, and hence $C$ is differentiable with respect to $e_m$.

\begin{center}
\begin{tikzpicture}[rotate=90]
\node [fill,circle,scale=0.5,label=below:$v_2$] (v2) at (-0.5,0) {};
\node [fill,circle,scale=0.5,label=right:$v_3$] (v3) at (1,0) {};
\node [fill,circle,scale=0.5] (v4) at (2,0) {};
\node [fill,circle,scale=0.5,label=right:$v_m$] (vm) at (3,0) {};
\node [fill,circle,scale=0.5] (v5) at (4,0) {};
\node [fill,circle,scale=0.5,label=right:$v_n$] (vn) at (5,0) {};
\node [fill,circle,scale=0.5,label=left:$v_1$] (v1) at (1,1.5) {};
\node [fill,circle,scale=0.5] (v6) at (2.5,1.5) {};
\node [fill,circle,scale=0.5,label=left:$v_{n+1}$] (v) at (4,1.5) {};
\draw [thick,->] (v2)-- node[right]{$e_2$} (v3);
\draw [thick,dashed, ->](v3) -- (v4) ;
\draw [thick,->] (v4)-- (vm);
\draw [thick,dashed, ->](vm) -- (vn) ;
\draw [thick,dashed, ->](v1) -- (v) ;
\draw [thick,->] (v1)-- node[left]{$e_1$} (v2);
\draw [thick,->,cyan] (vm)-- node[right]{$e_m$} (v1);
\draw [thick,->] (vn)-- node[above]{$e_n$} (v);
\end{tikzpicture}
\end{center}
\end{proof} 

\begin{lemma} Any non-self-intersecting cycle $C$ can be written as multiplication of a face and a cycle in the Jacobian algebra.  
\end{lemma}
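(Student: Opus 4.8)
The plan is to strip one face off $C$ by combining the differentiability lemma with the defining relation of the Jacobian algebra along the separating edge, and then to iterate. So first I would feed $C$ into the previous lemma to obtain an interior edge $e$ with respect to which $C$ is differentiable, so that $e$ cuts $C$ into a face $F$ and a smaller cycle. Concretely, $C$ contains all of $F$ except $e$; writing $\beta$ for that common arc, we have $F=\beta e$ as oriented cycles (with $e$ running from the terminus of $\beta$ back to its origin) and $C=\beta\gamma$, where $\gamma$ is the remaining arc of $C$, which — just like $e$ — runs from the terminus of $\beta$ to its origin.

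Next I would exploit the dimer structure on the $r$th sheet. Since $Q$ is planar on that sheet and $e$ is interior to $C$, the edge $e$ belongs to exactly one further face $G$; and because $F$ and $G$ are both oriented and sit on opposite sides of the directed edge $e$, one of them is clockwise and the other anti-clockwise, so they enter the superpotential $S_r=\sum(\text{clockwise faces})-\sum(\text{anti-clockwise faces})$ with opposite signs. Writing $\delta=\partial_e G$ for the arc of $G$ complementary to $e$ — a path parallel to $\beta$, running from the origin of $\beta$ to its terminus — the relation $\partial_e S_r\equiv 0$ in $P(Q,S_r)$ therefore collapses to $\beta\equiv\delta$. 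Right-multiplying by $\gamma$ gives $C=\beta\gamma\equiv\delta\gamma$ in $P(Q,S_r)$, and $\delta\gamma$ is again an oriented cycle of $Q$: its bounded region is that of $C$ with the two faces $F$ and $G$ removed, so it encloses strictly fewer faces than $C$.

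I would then run an induction on the number of faces that a non-self-intersecting oriented cycle encloses. If that number is one, $C$ is itself a face and lies in $J(S_r)$ by the earlier lemma that every face belongs to $J(S_r)$ (so it is trivially a face times a cycle). Otherwise $\delta\gamma$ encloses fewer faces; if $\delta\gamma$ is self-intersecting, I would decompose it — as observed above — into a product of non-self-intersecting oriented cycles, each enclosing still fewer faces, apply the inductive hypothesis to express each as a face times a cycle in $P(Q,S_r)$, and reassemble, using that paths which are cycles based at a common vertex multiply to a cycle. This exhibits $C\equiv\delta\gamma$ as a face times a cycle; since that face lies in $J(S_r)$ and $J(S_r)$ is a two-sided ideal, $C$ itself lies in $J(S_r)$, which is exactly what the rigidity statement needs.

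The heart of the argument is the middle step: verifying that the separating edge $e$ really borders a second genuine face $G$ lying inside $C$ (and not the outer region), and that the two orientations force $F$ and $G$ into $S_r$ with opposite signs, so that $\partial_e S_r\equiv 0$ reduces to the clean identification $\beta\equiv\delta$. This is precisely where the dimer structure turns the global problem into local combinatorics on a single sheet. The remaining points — that $\delta\gamma$ is again an oriented cycle, that the face count genuinely drops at each step (in fact it can never equal two, which keeps the recursion from stalling), and that working up to cyclic equivalence, as the definition of rigidity allows, sidesteps all basepoint bookkeeping — I expect to be routine.
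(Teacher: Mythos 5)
Your argument is correct and takes essentially the same route as the paper: locate the separating interior edge via the differentiability lemma, use the relation $\partial_e S_r$ to trade the arc of the face $F$ lying on $C$ for the complementary arc of the second face $G$ sharing that edge, thereby dropping the number of enclosed faces by two, and induct (base case a single face, which already lies in $J(S_r)$). Your additional care about the orientation signs of $F$ and $G$, the possible self-intersection of the reduced cycle, and the face-count parity only makes explicit points the paper's proof leaves implicit.
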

\begin{proof} We use induction on $k$, the number of faces contained inside the cycle.  If a cycle contains only one face, then by the lemma above, it belongs to the Jacobian ideal.

Let $C$ be a cycle containing $k$ faces, $F_1, F_2, \ldots, F_k$, with $n_1, n_2, \ldots, n_k$ number of edges respectively. We know that $C$ contains all vertices of at least one of these $k$ faces. Let that face be $F_i$, which starts and ends at the vertex $e_1$. So $C$ contains $n_i-1$ edges of $F_i$. Let $p_1:e_1\rightarrow e_{n_i}$ be the path consisting of $n_i-1$ edges of $F_i$ that also belong to $C$. As $C$ is a cycle, there exists a path, say $p_2:e_{n_i}\rightarrow e_1$ such that $C=p_1p_2$. Let $e$ be the $n_i$th edge of the face $F_i$ such that $F_i=e p_1.$ Now there exists a path $p_1^{\prime}:e_1\to e_{n_i}$ such that $\partial_e(S)=p_1-p_1^{\prime}$, which implies that $p_1=p_1^{\prime}$ in the Jacobian algebra. Hence $C=p_1p_2=p_1^{\prime}p_2$, reducing the number of faces inside $C$ to $k-2$.
\begin{center}
\begin{tikzpicture}
\node [fill,circle,scale=0.5,label=above:$e_{n_i}$] (en) at (0,-1) {};
\node [fill,circle,scale=0.5,label=left:$e_1$] (e1) at (1,0) {};
\draw [thick,red, ->] (en)-- node[below]{$e$} (e1);
\draw [thick,cyan,->](e1) .. node[above]{$p_1^{\prime}$} controls (0.75,0.75) and (-1.25,0.5) .. (en) ;
\draw [thick,dashed, ->>](en) arc [radius=1, start angle=270, end angle=50] node[right]{$p_2$}arc [radius=1, start angle=50, end angle=0] ;
\draw [thick, ->>, cyan](e1) arc [radius=1, start angle=90, end angle=-30] node[right]{$p_1$} arc [radius=1, start angle=-30, end angle=-180];
\draw [thick, ->>, dashed](e1) arc [radius=1, start angle=90, end angle=-30] node[right]{$p_1$} arc [radius=1, start angle=-30, end angle=-180];
\node at (1,-1) {$F_i$};
\end{tikzpicture}
\end{center}
\end{proof}

\noindent This shows that every cycle of a quiver $Q^{u,e}$ belongs to the Jacobian ideal corresponding to the superpotential $S$. Hence $S$ is a rigid potential. 


\bibliographystyle{alpha}

\end{document}